\def\BState{\State\hskip-\ALG@thistlm}
\newtheorem{thm}{Theorem}
\newtheorem{lem}{Lemma}
\newtheorem{cor}{Corollary}
\title{\LARGE \bf
Controller Synthesis for Safety of \\Physically-Viable Data-Driven Models
}
\author{Mohamadreza Ahmadi, Arie Israel, and Ufuk Topcu
\thanks{M. Ahmadi and U. Topcu are with the Department of Aerospace Engineering and Engineering Mechanics, and the Institute for Computational Engineering and Sciences (ICES), University of Texas, Austin, 201 E 24th St, Austin, TX 78712. Arie Israel is with the Department of Mathematics, University of Texas at Austin, 2515 Speedway Stop C1200 
Austin, Texas 78712-1202, USA.  e-mail: (\{mrahmadi, arie,  utopcu\}@utexas.edu). The work has been supported partly by AFRL  FA8650-15-C-2546, AFRL  FA8650-16-C-2610, DARPA W911NF-16-1-0001 and ARO W911NF-15-1-0592.
}
}
\begin{document}

\maketitle
\thispagestyle{empty}
\pagestyle{empty}

\begin{abstract}

We consider the problem of designing finite-horizon safe controllers  for a dynamical system for which no explicit analytical model exists and  limited data only along a single trajectory  of the system are available. Given samples of the states and  inputs of the system,  and additional  side information in terms of  regularity of the evolution of the  states  and conservation laws, we synthesize a controller such that the evolution of the states avoid some  pre-specified unsafe set over a given finite horizon.  Motivated by recent results on Whitney's extension theorem, we use   piecewise-polynomial approximations of the trajectories based on the data along with the regularity side information to formulate a data-driven differential inclusion model that can predict the evolution of the trajectories. For these classes of data-driven differential inclusions, we propose a safety analysis  theorem based on barrier certificates. As a corollary of this theorem, we demonstrate that we can design controllers ensuring safety of the solutions to the data-driven differential inclusion over a finite horizon. From a computational standpoint, our results are cast into a set of sum-of-squares  programs whenever the certificates are parametrized by polynomials of fixed degree and the sets are semi-algebraic. This computational method allows incorporating side information in terms of  conservation laws and integral constraints using Positivstellensatz-type arguments.


\end{abstract}

\section{INTRODUCTION}


Learning-based methods have been successful in modeling and controlling many dynamical systems~\cite{6315769,Lenz15deepmpclearning}. These methods often require a large number of system runs (e.g., trajectories from different initial conditions) over a long time span,   to achieve reasonable performance. However, for a relatively broad class of systems, collecting large sums of data can be too cumbersome or not viable at all. The scarcity of the available data is particularly noticeable for safety-critical systems, in which an abrupt change in system model can result in catastrophic control failures. For instance, it is not practically possible to test and collect data from all possible  failure scenarios for an unmanned vehicle~\cite{f15Israel}. Furthermore, for safety-critical systems, we need to construct a model that can be used to predict the behavior of the system, and the construction of and control with such models should not incur a high computational cost (as opposed to conventional learning methods).

Recent studies have shown that certain mathematical models in the form of differential equations can be extracted from data~\cite{Schmidt81}. In particular, \cite{HGH15} studied the problem of finding system dynamics when the system follows Lagrangian mechanics. Also, see~\cite{tran2016exact} for a method that can extract chaotic polynomial differential equations from noisy data and relies on the ergodicity property of the data such that the central limit theorem can be applied. However, these methods often require large amounts of training data, which may not  be available.  In the control literature, system analysis  based on input-output data or input-state data is not new. System identification techniques~\cite{Ljung20101} have looked into the problem of finding a model of the system based on data. Nevertheless, the available methods  are ``data-hungry'' or computationally expensive, especially if they require a validation stage.  Adaptive control techniques~\cite{Tao20142737} also studied controller synthesis methods for systems in which the system model is known up to a parametrization. Such parametrization of the system dynamics is not often available,  for instance, in the case of an abrupt system change.

One fundamental issue for safety critical systems is to   ensure the system behaves \emph{safely} or  guarantee that the system avoids certain \emph{unsafe} behavior. If the system model is given, verifying safety is a familiar subject to the control community~\cite{1215682,Prajna2006117}. One of the methods for safety verification relies on the construction of a function of the states, called the \emph{barrier certificate}~\cite{Prajna2006117}. Barrier certificates have shown to be useful in several system analysis  and control problems inluding bounding moment functionals of stochastic systems~\cite{ahmadi2016optimization}, safety analysis of systems described by partial differential equations~\cite{AHMADI201733}, safety verification of refrigeration systems~\cite{7602538}, and control of a swarm of silk moths~\cite{4587085}. It was also proved in~\cite{7236867} that for every safe dynamical system (defined in the appropriate sense), there exists a barrier certificate. To the authors' knowledge, the only article that applied barrier certificates for system analysis  based on data is~\cite{7402508}. However, the latter method requires large amounts of data, as well. 

Apart from safety analysis, several studies considered the so called control barrier functions as a means to render the solutions of a system safe. In~\cite{WIELAND2007462}, the authors, inspired by the notion of control Lyapunov functions~\cite{sontag89}, introduced control barrier functions. This formulation, however, requires a one-dimensional control signal. In the same vein, \cite{ROMDLONY201639} demonstrated that one can simultaneously search for a safe and stabilizing controller. Alternatively, \cite{ames2016control} proposed control barrier functions with a fixed logarithmic structure as a function of the unsafe set. It was also shown that  this control barrier function structure allows the safe controller synthesis problem to be solved by a set of quadratic programs, satisfies robustness properties such as input-to-state stability with regards to the perturbations to the vector field, and leads to Lipschitz continuous control laws ~\cite{XU201554}. This result was extended in~\cite{7524935} to exponential control barrier functions based on techniques from linear control theory.


 
  In this paper, we study safety analysis  and safe control of systems for which limited data in terms of state and input samples  from a single trajectory   is available (by \emph{limited},  we imply that the number of data samples is not large enough for determining the complete dynamics using a system identification or machine learning method). This research is motivated by the recent works on Whitney's extension problem~\cite{Fefferman2013} on finding interpolants with optimal regularity constants in terms of the $\mathcal{C}^n$-norms. Following the footsteps~\cite{Fefferman2013}, \cite{herbert2014computing} showed that the cubic spline polynomials are the best interpolants in terms of minimizing the Lipschitz constant ($\mathcal{C}^0$-norm) for twice continuously differentiable trajectories and proposed computational methods on how to find these interpolants. Accordingly, we build a data-driven differential inclusion model that can be used to predict the evolution of system trajectories based on the piecewise-polynomial (cubic spline) approximation of the state  and input data and some regularity information on the evolution of system state.
  
   Equipped with this data-driven model in terms of convex differential inclusions,  we formulate a safety analysis  theorem based on barrier certificates for differential inclusions using notions from set-valued analysis~\cite{aubinsetvalued} and the theory of differential inclusions~\cite{aubincelina}. This barrier certificate is a possibly non-smooth function of the states and time satisfying two inequalities along the solutions of the data-driven differential inclusion. Then, we present conditions to synthesize controllers ensuring safety of the latter differential inclusions, without imposing any \textit{a priori} fixed structure on the barrier functions.  We evince that both the analysis  and controller synthesis methods can be cast into a set of sum-of-squares programs whenever the certificates are parametrized by polynomials of fixed degree and the sets are semi-algebraic. We further demonstrate that our formulation can accommodate more side information such as conservations laws in terms of algebraic inequalities/equalities and hard integral quadratic constraints~\cite{6915700} based on the application of Positivstellensatz. 
   
   Preliminary results on  this work were discussed in~\cite{AIT17}, in which we brought forward the safety analysis method based on barrier certificates for the data-driven models. The current paper in addition to providing more detailed proofs to the results in~\cite{AIT17} with more discussion of the literature, proposes  a controller synthesis algorithm for safety of the data-driven
models, formulates a method to include  side information in terms of algebraic and
integral inequalities, such hard IQCs, and includes  a computational method based on sum-of-squares optimization to
synthesize the safe controllers.


The paper is organized as follows. The next section presents the notation and  some preliminary mathematical definitions. In Section~\ref{sec:main}, we show how the data-driven differential inclusion models are constructed from the piece-wise polynomial approximation of the data. In Section~\ref{sec:controlsafe}, we propose a method based on barrier certificates for safety analysis  of differential inclusions and a method for designing safe controllers for systems with limited data. Section~\ref{sec:compute} describes a computational approach for finding barrier certificates and designing safe controllers based on polynomial optimization, and Section~\ref{sec:side} delineates an approach using Positivstellensatz to include side infromation.   In Section~\ref{sec:numresults}, we illustrate the proposed method by two examples. Finally, Section~\ref{sec:conclusions} concludes the paper and provides directions for future research.

\textbf{Notation:}
The notations employed in this paper are relatively straightforward. $\mathbb{R}_{\ge 0}$ denotes the set $[0,\infty)$. $\| \cdot \|$ denotes the Euclidean  norm on $\mathbb{R}^n$. $\mathcal{R}[x]$ accounts for the set of polynomial functions with real coefficients in $x \in \mathbb{R}^n$,  and $\Sigma \subset\mathcal{R}$ is the subset of polynomials with an sum of squares decomposition; i.e,  $p \in  \Sigma[x]$ if and only if there are $p_i \in \mathcal{R}[x],~i \in \{1, \ldots ,k\}$ such that $p = p_i^2 + \cdots +p_k^2$.  We denote by $\mathcal{C}^m(X)$, with $X \subseteq \mathbb{R}^n$, the space of $m$-times continuously differentiable functions and by  $\partial^m = \frac{\partial^m}{\partial x^m}$ the derivatives up to order $m$. For $f \in \mathcal{C}^m(X)$, we denote by $\| f \|_{\mathcal{C}^m}$ the $\mathcal{C}^m$-norm given by
$$
\| f \|_{\mathcal{C}^m} = \max_{\alpha \le m}\sup_{x \in X} | \partial^{\alpha} f(x) |.
$$
For $f \in \mathcal{C}^m(X)$ and $x \in X$, we denote by $J_x(f)$  the $m$th degree Taylor polynomial of $f$ at $x$
$$
J_x(f)(x') = \sum_{\alpha \le m} \frac{\partial^\alpha f(x) (x'-x)^\alpha}{\alpha !}.
$$
Note that $J_x(f) \in \mathcal{R}[x]$. We denote by $\mathcal{L}(X;Y)$ the set of Lebesgue measurable functions mapping $X$ to $Y$ and similarly $\mathcal{C}^n(X;Y)$ the set of $n$-times continuously differentiable functions mapping $X$ to $Y$, where $X$ and $Y$ are subsets of the Euclidean space. $2^{A}$ signifies the power set of $A$.  Finally, for a finite set $A$, we denote by $co\{A\}$ the convex hull of the set~$A$.
\section{Preliminaries}

In this section, we discuss some preliminary mathematical notions and results that will be employed in the sequel.

\subsection{Whitney's Extension Problem}

Whitney's extension problem is concerned with the question of whether, given data on a function $f$, i.e., $\{ \partial^m f_i \}_{i=1}^N$ corresponding to $\{x_i\}_{i=1}^N$ such that $\partial^m f_i = \partial^m f_i(x_i)$, one can find a $\mathcal{C}^m$-function that approximates $f$. It can be described as follows. Suppose we are given an arbitrary subset $D \subset \mathbb{R}^n$ and a
function $f : D \to \mathbb{R}$. How can we
determine whether there exists a function $F \in \mathcal{C}^m(\mathbb{R}^n)$ such that $F = f$ on $D$? 

Whitney indeed addressed this problem for the case $n=1$.

\begin{thm}[Whitney's Extension Theorem \cite{citeulike:12551299}]
Let $E \subset \mathbb{R}^n$ be a closed set, and let $\{{P}_x\}_{x \in {E}}$ be a family of polynomials ${P}_x \in \mathcal{R}[x]$ indexed by the points of $E$. Then the following are equivalent.
\begin{enumerate}
\item[A.] There exists $F \in \mathcal{C}^m(\mathbb{R}^n)$ such that $J_x(F) = P_x$ for each $x \in E$.
\item[B.] There exists a real number $M > 0$ such that
$$
| \partial^\alpha P_x(x) | \le M \quad \text{for} \quad | \alpha | \le m,~x \in E.
$$
\end{enumerate}
\end{thm}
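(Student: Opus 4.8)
The plan is to prove the two implications separately, with essentially all of the substance lying in $\mathrm{B} \Rightarrow \mathrm{A}$. The direction $\mathrm{A} \Rightarrow \mathrm{B}$ is immediate: if such an $F \in \mathcal{C}^m(\mathbb{R}^n)$ exists, then differentiating the identity $J_x(F) = P_x$ at the base point gives $\partial^\alpha P_x(x) = \partial^\alpha F(x)$ for every $|\alpha| \le m$ and every $x \in E$. Since the $\partial^\alpha F$ are continuous, they are bounded on the (bounded) region in which the data lie, so $\mathrm{B}$ holds with $M = \max_{|\alpha| \le m} \sup_{x \in E} |\partial^\alpha F(x)|$.

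For the converse $\mathrm{B} \Rightarrow \mathrm{A}$, I would follow the classical Whitney construction. First I would take a Whitney decomposition of the open complement $\mathbb{R}^n \setminus E$ into a countable family of dyadic cubes $\{Q_j\}$ whose diameters are comparable to their distance from $E$, together with a smooth partition of unity $\{\varphi_j\}$ subordinate to a mild dilation of this cover and obeying the standard derivative bounds $|\partial^\alpha \varphi_j(x)| \le C_\alpha\, \mathrm{diam}(Q_j)^{-|\alpha|}$. Choosing for each cube a nearest point $x_j \in E$, I would define the extension by
\begin{equation}
F(x) = \begin{cases} P_x(x), & x \in E, \\[2pt] \sum_j \varphi_j(x)\, P_{x_j}(x), & x \notin E. \end{cases}
\end{equation}
Away from $E$ this sum is locally finite and each term is a polynomial, so $F$ is automatically $\mathcal{C}^\infty$ there; the entire difficulty is concentrated at the points of $E$.

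The crux is then to show that $F$ is $m$-times continuously differentiable on all of $\mathbb{R}^n$ and that $J_{x_0}(F) = P_{x_0}$ for each $x_0 \in E$. I would establish, for every multi-index $|\alpha| \le m$, the local Taylor estimate
\begin{equation}
\bigl| \partial^\alpha F(x) - \partial^\alpha P_{x_0}(x) \bigr| = o\!\left( |x - x_0|^{\,m - |\alpha|} \right) \quad \text{as } x \to x_0 ,
\end{equation}
which simultaneously yields differentiability up to order $m$ across $E$ and the jet-matching condition. Writing $F - P_{x_0} = \sum_j \varphi_j\,(P_{x_j} - P_{x_0})$ (using $\sum_j \varphi_j \equiv 1$) and applying the Leibniz rule, the estimate reduces to bounding the discrepancy $\partial^\beta(P_{x_j} - P_{x_0})(x)$ between the prescribed polynomials at two nearby base points $x_j, x_0 \in E$, weighted by the partition-of-unity derivative bounds, where condition $\mathrm{B}$ supplies the uniform control on the coefficients $\partial^\gamma P_x(x)$ that drives the comparison.

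I expect the main obstacle to be precisely this comparison of jets at nearby points: converting the bound from $\mathrm{B}$ into the required rate $o(|x-x_0|^{m-|\alpha|})$, since each factor $\partial^{\alpha-\beta}\varphi_j$ contributes a negative power $\mathrm{diam}(Q_j)^{-(|\alpha|-|\beta|)} \sim |x-x_0|^{-(|\alpha|-|\beta|)}$ that must be absorbed by the smallness of $P_{x_j} - P_{x_0}$. The decisive technical step will be a Taylor-with-remainder lemma showing that $\partial^\beta(P_{x_j} - P_{x_0})(x)$ vanishes to the appropriate order in $|x_j - x_0|$; once this is in hand, summing over the boundedly many overlapping cubes and letting $x \to x_0$ closes the argument and shows that the constructed $F$ realizes the prescribed family $\{P_x\}_{x \in E}$.
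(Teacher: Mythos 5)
The paper itself offers no proof of this statement: it is quoted as background from the literature (Whitney via Fefferman's survey), so your attempt can only be measured against the classical argument — and there it has a genuine gap, which is in fact concentrated exactly where you predicted the difficulty. Your plan for $\mathrm{B} \Rightarrow \mathrm{A}$ (Whitney cubes, partition of unity with $|\partial^\alpha \varphi_j| \lesssim \mathrm{diam}(Q_j)^{-|\alpha|}$, $F = \sum_j \varphi_j P_{x_j}$) is the right machinery, but the ``decisive technical step'' you defer — a Taylor-with-remainder lemma giving $\partial^\beta(P_{x_j}-P_{x_0})(x) = o(|x_j - x_0|^{m-|\beta|})$ — \emph{cannot be derived from condition} $\mathrm{B}$. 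Condition $\mathrm{B}$ bounds each jet's coefficients at its own base point and imposes no relation whatsoever between jets at distinct points of $E$. In Whitney's theorem this jet-compatibility is a separate hypothesis (Whitney's condition: $\partial^\alpha(P_x - P_y)(y) = o(|x-y|^{m-|\alpha|})$ as $|x-y| \to 0$, uniformly on compact subsets of $E$, or the norm-controlled variant with bound $M|x-y|^{m-|\alpha|}$), and the statement as transcribed in the paper omits it. Without it the implication $\mathrm{B} \Rightarrow \mathrm{A}$ is simply false: take $n = m = 1$, $E = \mathbb{R}$, and $P_x \equiv 0$ for $x < 0$, $P_x \equiv 1$ for $x \ge 0$. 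Then $|\partial^\alpha P_x(x)| \le 1$ for all $|\alpha| \le 1$, so $\mathrm{B}$ holds with $M = 1$; yet any $F$ with $J_x(F) = P_x$ would equal the Heaviside function on $E$ and hence cannot be continuous, let alone $\mathcal{C}^1$. So no amount of care in the cube-by-cube estimates can close your argument — the smallness of $P_{x_j} - P_{x_0}$ that must absorb the factors $\mathrm{diam}(Q_j)^{-(|\alpha|-|\beta|)}$ has to be \emph{assumed}, not proved.

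A secondary slip: in $\mathrm{A} \Rightarrow \mathrm{B}$ you invoke boundedness of the continuous functions $\partial^\alpha F$ on ``the (bounded) region in which the data lie,'' but $E$ is only assumed closed and may be unbounded, so this argument does not apply as written. The direction is still fine provided $\mathcal{C}^m(\mathbb{R}^n)$ is read, as the paper's definition of $\|\cdot\|_{\mathcal{C}^m}$ via global suprema suggests, as the space of functions with finite $\mathcal{C}^m$-norm: then $|\partial^\alpha P_x(x)| = |\partial^\alpha F(x)| \le \|F\|_{\mathcal{C}^m}$ immediately, with no compactness needed. If instead $\mathcal{C}^m(\mathbb{R}^n)$ merely means $m$-times continuously differentiable, then $\mathrm{A} \Rightarrow \mathrm{B}$ also fails for unbounded $E$ (e.g.\ $F(x) = x$ on $E = \mathbb{R}$), which is further evidence that the paper's rendition of the theorem has dropped hypotheses; the correct target to prove is the classical statement with Whitney's compatibility condition adjoined to $\mathrm{B}$, for which your construction, supplemented by the standard jet-comparison lemma, does go through.
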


Recently in \cite{10.2307/3597349} and \cite{fefferman2009} considered a more general problem. That is,  given  $\{  f_i=f(x_i) \}_{i=1}^N$, the problem of computing a function $F \in \mathcal{C}^m(\mathbb{R}^n)$ and a real number $M \ge 0$ such that
$$
\|F\|_{\mathcal{C}^m} \le M \quad \text{and} \quad |F(x) - f(x)| \le M\sigma(x),\quad \forall x \in E.
$$
The function $\sigma:\mathbb{R}^n \to \mathbb{R}_{\ge0}$ is determined by the problem under study and from "observations".  The function also serves as a ``tolerance''. It implies that the graph of $F$ passes sufficiently close to the $N$ given data points. 

Computing such a function $F$ in the general form is a cumbersome task and amounts to computing sets containing $F$~\cite{fefferman2011,10.2307/40345469}. In this paper, instead of considering general interpolants of data, we focus on piecewise-polynomial approximations for which construction algorithm are widely available~\cite{de2001practical}.

\subsection{Piecewise-Polynomial Approximation: B-Splines}

B-spline functions~\cite{peter1986curve} have properties that make them
very suitable candidates for function approximation. They can be efficiently computed in closed form based on available algorithms~\cite{de2001practical}.  B-splines are widely
employed in computer graphics, automated manufacturing, data fitting, computer graphics, and computer aided design~\cite{Farin1993133}.

A $p$th degree B-spline function, $f(t)$, defined by $n$ control
points (points that the curve passes through) and knots $\hat{t}_i$, $i=1,2,\ldots,n + p + 1$, is given by 
\begin{equation} \label{cscs}
f(t) = \sum_{i=1}^n \beta_i Q_{i,p}(t).
\end{equation}
Knot vectors are sets of non-decreasing real numbers. The
spacing between knots defines the shape of the curve along with
the control points. Function $Q_{i,p}(t)$ is called $i$th B-spline basis function of order $p$ and it can be described by the recursive equations 
\begin{equation}\label{dada}
Q_{i,0}(t) = \begin{cases} 1 & t \in [\hat{t}_i, \hat{t}_{i+1}) \\ 0 & \text{otherwise}  \end{cases}
\end{equation}
and
\begin{equation}\label{eqwew}
Q_{i,p}(t) = \frac{t - \hat{t}_i}{\hat{t}_{i+p}-\hat{t}_i}Q_{i,p-1}(t) + \frac{\hat{t}_{i+p+1} - t}{\hat{t}_{i+p+1}-\hat{t}_{i+1}} Q_{i+1,p-1}(t),
\end{equation}
 defined using the Cox-de
Boor algorithm \cite{de2001practical}. First-order basis functions are evaluated
using~\eqref{dada}, followed by iterative evaluation of \eqref{eqwew} until
the desired order is reached. In contrast to B\'ezier curves, the number $n$ of control points of the curve, is independent of the order, $p$, providing more
robustness for the generated paths topology.

Furthermore, the derivative of a B-spline $Q_{i,p}$ of degree $p$  is simply a function of B-splines of degree $p-1$:
\begin{equation} \label{fscsc}
\frac{\mathrm{d}Q_{i,p}(t)}{\mathrm{d}t} = (p-1)\bigg( \frac{-Q_{i+1,p-1}}{t_{i+p}-t_{i+1}}+ \frac{Q_{i,p-1}}{t_{i+p-1}-t_{i}} \bigg).
\end{equation}

\section{Construction of Data-Driven Differential Inclusions} \label{sec:main}

In this section, we bring forward a method that uses B-spline approximation of  data and known regularity side information to construct a data-driven model. We call this model a data-driven differential inclusion. We show in the sequel that such data-driven differential inclusions enable safety analysis  and controller synthesis.

\subsection{Piecewise-Polynomial Approximation of the System Dynamics}

We study systems for which,  at  time instances $t_1\le t_2 \le \cdots \le t_N$, samples of control and state values are available. In order to construct  data-driven differential inclusions, we employ the tensor-product spline technique~\cite{GROSSE198029} which allows for efficient approximation of multi-variate systems. To this end, we parametrize the states as a function of $t$ and $u=(u_1,\ldots,u_m)^T$,   $x(t,u)$. Furthermore, in order to obtain a control-affine model, we consider  first-order spline (piecewise linear) functions of control inputs $u=(u_1,\ldots,u_m)^T$. That is, for each state, we have 
\begin{equation}
X_l(t) = \sum_{i=1}^L \sum_{j=1}^m c_{i,j}^l Q_{i,j}^l(t) (a_{0,j}^l+a_{1,j}^lu_j),~~ l=1,2,\ldots,n.
\end{equation}
We are interested in approximating the time evolution of the state. Computing the time-derivative of $X$ yields
\begin{multline} \label{eq:XXXXXZAA}
\dot{X}_l(t) = \sum_{i=1}^L \sum_{j=1}^m c_{i,j}^l \dot{Q}_{i,j}^l(t) (a_{0,j}^l+a_{1,j}^lu_j) 
\\= \sum_{i=1}^L \sum_{j=1}^m a_{0,j}^l c_{i,j}^l \dot{Q}_{i,j}^l(t) + \sum_{i=1}^L \sum_{j=1}^m a_{1,j}^l c_{i,j}^l  \dot{Q}_{i,j}^l(t) u_j,
\end{multline}
where $\dot{Q}_{i,j}^l(t)$ can be computed efficiently using the recursive formula~\eqref{fscsc}. Define
\begin{eqnarray*}
f_l(t) &=& \sum_{i=1}^L \sum_{j=1}^m a_{0,j}^l c_{i,j}^l \dot{Q}_{i,j}^l(t) \quad \text{and} \nonumber \\
G_l(t) &=& \begin{bmatrix} \sum_{i=1}^L  a_{1,1}^l c_{i,1}^l  \dot{Q}_{i,1}^l(t) \\ \vdots \\ \sum_{i=1}^L  a_{1,m}^l c_{i,m}^l  \dot{Q}_{i,m}^l(t) \end{bmatrix}^T \in \mathbb{R}^{1\times m}.  \nonumber 
\end{eqnarray*}
Then, \eqref{eq:XXXXXZAA} can be rewritten as
\begin{equation} \label{dssdsdcceeww}
\dot{X}(t) = F(t)+G(t)u,
\end{equation}
where $X= (X_1(t),\cdots,X_n(t))^T$, $F = (f_1(t),\cdots,f_n(t))^T$ and $G=(G_1(t),\cdots,G_n(t))^T$.

\subsection{Data-Driven Differential Inclusions}

We are given  samples of the state $\{x(t_i,u_i)\}_{i=1}^N$ at time instances $t_1\le t_2\le \cdots \le t_N$. That is,  information about only one trajectory  is available. We consider state evolutions that belong to $\mathcal{C}^2(\mathbb{R}_{\ge 0})$. Hence, $\dot{x}(t) \in \mathcal{C}^1(\mathbb{R}_{\ge 0})$. We consider cases in which, in addition to state and input samples, some prior regularity knowledge (that we also call side information) on the state evolutions  may be available. We capture such information by constraints in the form of 
$$
\| x \|_{\mathcal{C}^2} \le M,
$$
for a constant $M>0$. For mechanical systems, for example, the above constraint represents  bounds on maximum acceleration. 

In order to account for the uncertainty in approximating $\dot{x}$ with the function $\dot{X}$,  we introduce  a function $\sigma:\mathbb{R}_{\ge 0} \to \mathbb{R}_{\ge 0}$ such that
 \begin{equation}\label{eq:sidenew}
 |\dot{X}(t) - \dot{x}(t)| \le M\sigma(t).
\end{equation}
 In the case of piecewise-polynomial approximation of the data, we have~\eqref{dssdsdcceeww}. From the side information~\eqref{eq:sidenew}, we have $\dot{X}_{-} \le \dot{x} \le \dot{X}_{+}$, where $\dot{X}_{-}=\dot{X}(t)-M\sigma(t)$ and $\dot{X}_{+}=\dot{X}(t)+M\sigma(t)$ and $\dot{X}(t)$ is given as~\eqref{dssdsdcceeww}.  Hence, the dynamics of the system for $t>t_N$ can be described by the following data-driven differential inclusion
\begin{eqnarray}\label{mainDIS}
\begin{cases} \dot{x}(t) \in co \big \{ \dot{X}_{-}(t),\dot{X}_{+}(t) \big \},  \\
 x(t_N)=x_N.
 \end{cases}
 \end{eqnarray}
 Note that the above dynamics are  dependent on the control signal through~\eqref{dssdsdcceeww}.
 
 \begin{figure}[!t]
\centering
\includegraphics[width=7cm]{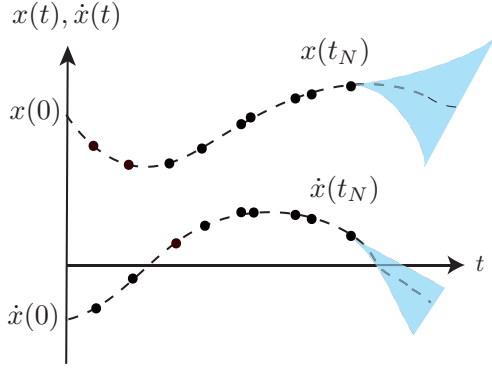}
\caption{Data samples (black dots), actual system state evolution  (dashed lines), and the solution set of the data-driven differential inclusion (blue). } \label{fig1ccc}
\end{figure}
 

 Differential inclusion~\eqref{mainDIS} in fact over-approximates the dynamics after $t>t_N$. Note that the information on the system state is only available for $0<t<t_N$ and the regularity information ($M$ and $\sigma$) provides the means using~\eqref{mainDIS} to predict the behavior of system state. The  rate of growth of the approximation error for $t>t_N$ can be approximated by the application of Gronwall's inequality~\cite{aubincelina}. This  rate is a linear function of the Lipschitz constant of the interpolant.  Nonetheless, cubic-spline polynomials are the best interpolants in terms of minimizing the Lipschitz constant for $\mathcal{C}^2$ trajectories~\cite{herbert2014computing}. Figure~\ref{fig1ccc} illustrates the solution set of the differential inclusion~\eqref{mainDIS} for a system with only one state and $\sigma(t) = 1,~~t>0$.

 We are interested in solving the following safety analysis  problem.
\\

\textbf{Problem 1:} Consider the data-driven differential inclusion~\eqref{mainDIS} with $u \equiv 0$. Given $\mathcal{X}_u \subset \mathbb{R}^n$ and $T>t_N$, check whether $x(T) \notin \mathcal{X}_u$.
\\

In addition, we are interested in addressing the following safe controller synthesis problem.
\\

\textbf{Problem 2:} Consider the data-driven differential inclusion~\eqref{mainDIS}. Given $\mathcal{X}_u \subset \mathbb{R}^n$ and $T>t_N$, find a feedback law $u(x)$ such that $x(T) \notin \mathcal{X}_u$.
\\
  
 In the next section, we discuss differential inclusions in the form of~\eqref{mainDIS} and we address Problem 1 and Problem 2.

\section{Safety analysis  and Safe Controller Synthesis for Differential Inclusions}\label{sec:controlsafe}

We start by deriving a safety theorem based on barrier certificates for convex differential inclusions and then  show how this result can be applied to the data-driven differential inclusion~\eqref{mainDIS} for safe controller synthesis.

Let $\{f_i(t,x,u)\}_{i=1}^m$ be a family of (piecewise) smooth functions, where $f_i:\mathcal{T} \times \mathcal{X} \times \mathcal{U}  \to \mathbb{R}^n$ with $\mathcal{X} \subseteq \mathbb{R}^n$, $\mathcal{T} \subseteq \mathbb{R}_{\ge0}$, and $\mathcal{U} \subseteq \mathbb{R}^m$.  We further assume $u \in \mathcal{L}^\infty$. Define $\mathcal{F}: \mathcal{T} \times \mathcal{X} \times \mathcal{U} \to 2^{\mathbb{R}^n}$ with
$$
\mathcal{F}(t,x,u) = co \{ f_1(t,x,u),\ldots, f_m(t,x,u) \}.
$$
Consider the following differential inclusion
\begin{eqnarray} \label{eq:DImain}
\begin{cases}
\dot{x} \in \mathcal{F}(t,x,u),\quad t \ge t_0, \label{eq:DI}  \\
x(t_0) = x_0.
\end{cases}
\end{eqnarray}


Well-posedness conditions of differential inclusions~\cite[Theorem 1, p. 106]{fillipov} require the set-valued map $\mathcal{F}$ to be closed and convex for all $t$, $x$, and $u$, and also measurable in $u$. The set $\mathcal{F}(t,x,u)$ is closed and convex, because it is defined as the convex hull of a finite set.   Furthermore, the mapping $\mathcal{F}(t, \cdot,u) : \mathcal{X}\to 2^{\mathbb{R}^n}$ is upper hemi-continuous in $x$, for all $t$ and $u$, because it can be written as a convex combination of smooth mappings $f_i(t,\cdot,u)$.
Finally, the mapping $\mathcal{F}(t,x,u)$ satisfies the one-sided Lipschitz condition
\begin{equation*}
(x_1-x_2)^T \left(v_1 - v_2 \right) \le C \| x_1-x_2\|^2, \quad \forall t>0,
\end{equation*}
for some $C>0$ and all $x_1 \in \mathcal{X}$, $x_2 \in \mathcal{X}$, $v_1 \in \mathcal{F}(t,x_1,u)$ and $v_2 \in \mathcal{F}(t,x_2,u)$, which follows from the fact that $\mathcal{F}(t,\cdot,u)$ is a convex hull of smooth and  Lipschitz functions for all $t$ and $u$. 

\subsection{Safety Analysis for Autonomous Differential Inclusions} \label{sec:ddss|}

In order to propose a solution to Problem 1,  we  extend the concept of barrier certificates to differential inclusions. 

%

 Before stating the result, we require a definition of the derivative for set-valued maps~\cite{aubinsetvalued}. Denote by
$$
D_{+} V(x)(f) = \liminf_{h \to 0^{+},~v \to f} \frac{V(x+hv)-V(x)}{h}
$$
the upper contingent derivative of $V$ at $x$ in the direction $f$. In particular, when $V$ is Gateaux differentiable and $\mathcal{F}=\{v\}$ is a singleton, $D_{+}V(x)$ coincides with the gradient 
$$
D_{+}V(x)(v) = \left(\nabla V(x)\right)^T v.
$$  

\begin{thm} \label{thm:barriersDifInq}
Consider differential inclusion~\eqref{eq:DI} with $u \equiv 0$ and let $T>t_0$. If there exist a function $B \in \mathcal{C}^1(\mathbb{R}^n;\mathbb{R}) \cap \mathcal{C}^1(\mathbb{R}_{\ge 0};\mathbb{R})$ and a positive definite function $W\in \mathcal{L}(\mathbb{R}^n \times \mathbb{R}_{\ge 0};\mathbb{R}_{\ge 0})$ such that 
\begin{equation}\label{eq:Bcon1}
B\left(x(T),T\right)-B\left(x(t_0),t_0\right) >0, \quad x(T) \in \mathcal{X}_u,
\end{equation}
\begin{equation} \label{eq:Bcon2}
D_{+} B(t,x)(v,1) \le -W(t,x), \quad t \in [t_0,T],~~v \in \mathcal{F}(t,x),
\end{equation}
then the solutions of \eqref{eq:DI} satisfy $x(T) \notin \mathcal{X}_u$.
\end{thm}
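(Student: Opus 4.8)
The plan is to run a barrier-certificate contradiction argument that tracks the value of $B$ along an arbitrary solution of the inclusion. First I would fix any solution $x(\cdot)$ of \eqref{eq:DI} with $u\equiv 0$ defined on $[t_0,T]$ --- its existence and extendability to $T$ being supplied by the Filippov-type well-posedness noted above --- and recall that such a solution is absolutely continuous with $\dot x(t)\in\mathcal F(t,x(t))$ for almost every $t$. I then introduce the scalar map $\phi(t):=B(x(t),t)$. Since $B$ is continuously differentiable in both arguments and $x(\cdot)$ is absolutely continuous, $\phi$ is absolutely continuous and hence differentiable at almost every $t\in[t_0,T]$.

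Next I would compute $\dot\phi$ wherever it exists. At an a.e.\ point $t$ the chain rule gives $\dot\phi(t)=\nabla_x B(x(t),t)^T\dot x(t)+\partial_t B(x(t),t)$. Because $B$ is Gateaux differentiable, the upper contingent derivative collapses to this gradient pairing --- exactly the identity $D_{+}V(x)(v)=\left(\nabla V(x)\right)^T v$ recorded before the theorem --- applied here to $B$ on $\mathbb R_{\ge0}\times\mathbb R^n$ in the direction $(\dot x(t),1)$. Hence $\dot\phi(t)=D_{+}B(t,x(t))(\dot x(t),1)$. Using $\dot x(t)\in\mathcal F(t,x(t))$ and invoking \eqref{eq:Bcon2} with $v=\dot x(t)$ yields $\dot\phi(t)\le -W(t,x(t))\le 0$ for almost every $t\in[t_0,T]$.

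I would then integrate this a.e.\ inequality. Absolute continuity of $\phi$ gives $B(x(T),T)-B(x(t_0),t_0)=\phi(T)-\phi(t_0)=\int_{t_0}^{T}\dot\phi(s)\,ds\le 0$. Suppose, toward a contradiction, that $x(T)\in\mathcal X_u$. Then condition \eqref{eq:Bcon1} forces $B(x(T),T)-B(x(t_0),t_0)>0$, contradicting the integral bound. Therefore every solution reaching time $T$ satisfies $x(T)\notin\mathcal X_u$, which is the claim. I would also remark that only $W\ge 0$ is actually used: the strict separation is provided entirely by the strict inequality in \eqref{eq:Bcon1}, so positive definiteness of $W$ is not needed for this direction.

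The step I expect to demand the most care is the second one. Two facts must be pinned down: that the $\liminf$ defining $D_{+}B$ genuinely coincides with the two-sided classical derivative under the $\mathcal C^1$ hypothesis, and that the chain-rule identity holds almost everywhere along a \emph{merely} absolutely continuous solution, so that one may legitimately replace the set-valued velocity $\mathcal F(t,x(t))$ by the single realized velocity $\dot x(t)$ before applying \eqref{eq:Bcon2}. Once this identification and the absolute continuity of $\phi$ are secured, the monotonicity and contradiction steps are routine.
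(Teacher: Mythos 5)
Your proof is correct, and it reaches the conclusion by the same overall strategy as the paper: show that $B$ is non-increasing along every solution of the inclusion, then contradict the strict inequality \eqref{eq:Bcon1} at time $T$. The difference is in how the monotonicity is established. The paper treats it as a one-line consequence of the comparison theorem for differential inclusions (citing Aubin--Cellina, Prop.~8, p.~289), obtaining $B(x(s),s)-B(x(t_0),t_0)\le -\int_{t_0}^{s}W(x,\tau)\,d\tau\le 0$ directly from \eqref{eq:Bcon2}; you instead unpack that black box: you use absolute continuity of the solution, absolute continuity of $\phi(t)=B(x(t),t)$ (via local Lipschitzness of the $\mathcal{C}^1$ function $B$ on the compact image of $[t_0,T]$), the a.e.\ chain rule, the collapse of $D_{+}B$ to the classical directional derivative under the smoothness hypothesis, and the fundamental theorem of calculus for AC functions. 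This buys self-containedness and makes explicit exactly which realized velocity $v=\dot x(t)\in\mathcal{F}(t,x(t))$ the hypothesis \eqref{eq:Bcon2} is applied to --- a point the paper's citation-based step glosses over --- at the cost of a longer argument; the paper's route is shorter and extends more readily to genuinely non-smooth $B$, where the contingent-derivative comparison theorem still applies but your classical chain rule does not. Your side remark is also accurate: both proofs in fact use only $W\ge 0$, so positive definiteness of $W$ is not needed for this conclusion. One cosmetic note: the paper's own proof contains a typo ($t_N$ where $t_0$ is meant), which your write-up correctly avoids.
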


\begin{proof}
The proof is carried out by contradiction. Assume it holds that $x(T) \in \mathcal{X}_u$. Then, \eqref{eq:Bcon1} implies that 
$$
B\left(x(T),T\right)>B\left(x(t_0),t_0\right).
$$
Furthermore, using the comparison theorem for differential inclusions~\cite[Proposition~8, p. 289]{aubincelina} and inequality~\eqref{eq:Bcon2}, we can infer that
$$
B(x(s),s)-B(x(t_0),t_0) \le - \int_{t_0}^s W(x,\tau)~d\tau \le 0.
$$
That is, 
$$
B(x(s),s) \le B(x(t_N),t_N).
$$
Since $s$ was chosen arbitrary, this is a contradiction. Thus, the solutions of \eqref{eq:DI} satisfy $x(T) \notin \mathcal{X}_u$.
\end{proof}

Theorem~\ref{thm:barriersDifInq} presents conditions to check the safety of the solutions of a differential inclusion. The above formulation only necessitates the barrier certificate to have  an upper contingent derivative. Hence, our formulation here includes non-smooth barrier certificates. The closet work on this class of barrier certificates is the recent research~\cite{7937882}, wherein the authors considered min/max barrier functions with application to control of multi-robot systems. Nonetheless, the formulation in~\cite{7937882} is based on fixing the the barrier functions, i.e., the 0-super level sets of the barrier functions define the  unsafe sets. This assumption can be restrictive in many cases, in particular, general semi-algebraic sets. In this study, we do not make such assumptions on the structure of the barrier functions and we allow them to be variables in the computational formulation in Section~\ref{sec:compute}.


\subsection{Safety Controller Synthesis for Data-Driven Differential Inclusions} \label{dccsdccsec}

 Having established a safety analysis theorem for differential inclusions,  we proceed by proposing conditions for safety analysis and safe controller synthesis  for  data-driven differential inclusion~\eqref{mainDIS}. We begin with the safety analysis result, which follows from Theorem~\ref{thm:barriersDifInq}. In other words, given the limited data over states up to some time $t_N$ and the regularity information, we verify whether the system behaves safely at a given time $T>t_N$. 

\begin{cor}\label{cordsdsds}
Consider differential inclusion~\eqref{mainDIS} with $u \equiv 0$ and let $T>t_N$. If there exist a function $B \in \mathcal{C}^1\left(\mathbb{R}^n;\mathbb{R}) \cap \mathcal{C}^1([t_N,\infty);\mathbb{R}\right)$ and two positive definite functions $W_{+} \in \mathcal{L}\left(\mathbb{R}^n \times [t_N,\infty);[t_N,\infty)\right)$ and $W_{-} \in \mathcal{L}\left(\mathbb{R}^n \times [t_N,\infty);[t_N,\infty)\right)$ such that 
\begin{equation}\label{eq:Bcon11}
B\left(x(T),T\right)-B\left(x(t_N),t_N\right) >0, \quad x(T) \in \mathcal{X}_u,
\end{equation}
\begin{eqnarray} 
D_{+} B(t,x)\left(\dot{X}_{-},1\right) \le -W_{-}(t,x), \quad t \in [t_N,T], \label{eq:Bcon22} \\
D_{+} B(t,x)\left(\dot{X}_{+},1\right) \le -W_{+}(t,x), \quad t \in [t_N,T], \label{eq:Bcon33}
\end{eqnarray}
then the solutions of \eqref{mainDIS} satisfy $x(T) \notin \mathcal{X}_u$.
\end{cor}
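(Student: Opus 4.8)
The plan is to deduce Corollary~\ref{cordsdsds} directly from Theorem~\ref{thm:barriersDifInq} by recognizing that the data-driven differential inclusion~\eqref{mainDIS} is precisely the inclusion~\eqref{eq:DI} with the two-vertex set-valued map $\mathcal{F}(t,x) = co\{\dot{X}_{-}(t), \dot{X}_{+}(t)\}$ and with $t_0$ replaced by $t_N$. Hypothesis~\eqref{eq:Bcon11} is then verbatim hypothesis~\eqref{eq:Bcon1}, so the only thing that requires work is to manufacture a single positive definite function $W$ for which~\eqref{eq:Bcon2} holds along \emph{every} direction $v \in \mathcal{F}(t,x)$, starting from the two vertex inequalities~\eqref{eq:Bcon22} and~\eqref{eq:Bcon33}. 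The heart of the argument is thus the propagation of these two inequalities from the generators $\dot{X}_{-}$ and $\dot{X}_{+}$ to the whole convex hull.

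The first step is to exploit that $B \in \mathcal{C}^1$. As recorded immediately after the definition of the upper contingent derivative, Gateaux differentiability of $B$ collapses $D_{+}B(t,x)(\cdot,1)$ to the ordinary directional derivative
$$
D_{+}B(t,x)(v,1) = \left(\nabla_x B(t,x)\right)^T v + \partial_t B(t,x),
$$
which, for fixed $(t,x)$, is an \emph{affine} function of the direction $v$. Since affine maps commute with convex combinations, this linearization is exactly the structural feature that makes it enough to test the two extreme generators rather than a continuum of directions.

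Next I would fix $t \in [t_N,T]$ and an arbitrary $v \in co\{\dot{X}_{-}, \dot{X}_{+}\}$, writing $v = \lambda \dot{X}_{-} + (1-\lambda)\dot{X}_{+}$ for some $\lambda \in [0,1]$. Invoking the affine identity above yields
$$
D_{+}B(t,x)(v,1) = \lambda\, D_{+}B(t,x)(\dot{X}_{-},1) + (1-\lambda)\, D_{+}B(t,x)(\dot{X}_{+},1),
$$
and then~\eqref{eq:Bcon22} and~\eqref{eq:Bcon33} bound the right-hand side above by $-\bigl(\lambda W_{-}(t,x) + (1-\lambda) W_{+}(t,x)\bigr)$. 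Because a convex combination of two nonnegative numbers dominates their minimum, this is in turn at most $-W(t,x)$ once I set $W(t,x) := \min\{W_{-}(t,x), W_{+}(t,x)\}$. The pointwise minimum of two positive definite functions is positive definite, so $W$ qualifies as the function required in Theorem~\ref{thm:barriersDifInq}, and~\eqref{eq:Bcon2} now holds uniformly over all $v \in \mathcal{F}(t,x)$.

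Finally, with~\eqref{eq:Bcon11} supplying~\eqref{eq:Bcon1} and the constructed $W$ supplying~\eqref{eq:Bcon2}, Theorem~\ref{thm:barriersDifInq} applies to~\eqref{mainDIS} and delivers $x(T) \notin \mathcal{X}_u$. I expect the genuine obstacle to be the vertex-to-hull propagation of the third step: it goes through only because $\mathcal{C}^1$-regularity of $B$ linearizes the contingent derivative, so that certifying the bound at the two extreme velocities $\dot{X}_{-}$ and $\dot{X}_{+}$ automatically certifies it along every admissible velocity. Were $B$ merely equipped with a contingent derivative that is not affine, the convex-combination identity would degrade to an inequality and one would have to argue more carefully about which direction in the hull is worst-case.
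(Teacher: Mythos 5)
Your proposal is correct and follows essentially the same route as the paper's own proof: both arguments reduce to Theorem~\ref{thm:barriersDifInq} by propagating the two vertex inequalities~\eqref{eq:Bcon22}--\eqref{eq:Bcon33} to the whole convex hull via linearity of the derivative in the direction $v$, and both take $W = \min\{W_{-},W_{+}\}$ to obtain~\eqref{eq:Bcon2}. If anything, your justification of the key step is slightly more careful than the paper's: you ground the affine identity in the $\mathcal{C}^1$ hypothesis on $B$ (under which the contingent derivative collapses to $\partial_t B + (\nabla_x B)^T v$), whereas the paper simply asserts that $D_{+}$ is a linear operator.
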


\begin{proof}
Inequality~\eqref{eq:Bcon11} ensures that ~\eqref{eq:Bcon1} holds.
Multiplying both sides of inequality~\eqref{eq:Bcon22} with a constant $0 \le \alpha_{-}\le 1$ and inequality~\eqref{eq:Bcon33} with a constant $0 \le \alpha_{+}\le 1$  such that $ \alpha_{-}+\alpha_{+} =1$ and adding them, we obtain
\begin{multline*}
 \alpha_{-}  D_{+} B(t,x)(\dot{X}_{-},1) +\alpha_{+}  D_{+} B(t,x)(\dot{X}_{+},1)  
\\  \le - \alpha_{-} W_{-}(t,x)-\alpha_{+} W_{+}(t,x).
\end{multline*}
Since $D_{+}$ is a linear operator, we have
\begin{multline*}
D_{+} B(t,x)( \alpha_{-}\dot{X}_{-}+ \alpha_{+}\dot{X}_{+},1) \\ \le D_{+} B(t,x)( \alpha_{-}\dot{X}_{-}) + D_{+} B(t,x)( \alpha_{-}\dot{X}_{+})   \\ \le - \alpha_{-} W_{-}(t,x)-\alpha_{+} W_{+}(t,x),
\end{multline*}
where, in the last line above, we applied inequalities~\eqref{eq:Bcon22} and \eqref{eq:Bcon33}. Let $W(t,x) = \min \left\{ W_{-}(t,x),W_{+}(t,x) \right\}$. We obtain
\begin{multline*}
D_{+} B(t,x)(\alpha_{-}\dot{X}_{-}+ \alpha_{+}\dot{X}_{+},1) \\ \le - \alpha_{-} W_{-}(t,x)-\alpha_{+} W_{+}(t,x) \\ \le -(\alpha_{-}+\alpha_{+}) W(t,x) = -W(t,x).
\end{multline*}
That is, 
$$
D_{+} B(t,x)(v,1) \le -  W(t,x),\quad v \in co\{\dot{X}_{-},\dot{X}_{+}\}.
$$
Thus inequality~\eqref{eq:Bcon2} is also satisfied. This completes the proof.
\end{proof}

We show in Section~\ref{sec:compute} that if we parametrize $B$ and $W$ by polynomials of fixed degree and let $\mathcal{X}_u$ be a semi-algebraic set, then we can check inequalities~\eqref{eq:Bcon11} through~\eqref{eq:Bcon33} using sum of squares programming.


We next address Problem 2 by proposing a  method to design controllers such that solutions to~\eqref{dssdsdcceeww} are safe with respect to a given unsafe set $\mathcal{X}_u$  for all $t>t_N$. In other words, we address Problem 2.

\begin{cor}\label{cordsdsds}
Consider differential inclusion~\eqref{mainDIS} and let $T>t_N$. If there exist a function $B \in \mathcal{C}^1\left(\mathbb{R}^n;\mathbb{R}) \cap \mathcal{C}^1([t_N,\infty);\mathbb{R}\right)$ and a positive definite function $W\in \mathcal{L}\left(\mathbb{R}^n \times [t_N,\infty);[t_N,\infty)\right)$ such that 
\begin{equation}\label{eq:Bcon12221}
B\left(x(T),T\right)-B\left(x(t_N),t_N\right) >0, \quad x(T) \in \mathcal{X}_u,
\end{equation}
\begin{multline} \label{rwewdw2222wcv}
\frac{\partial B}{\partial t} +  \left( \frac{\partial B}{\partial x} \right)^T( f-M\sigma) \le  -W(t,x),\\ \forall x \in \mathcal{X},~~\forall t \in [t_N,T],
\end{multline}
\begin{multline} \label{cccxzeew}
\frac{\partial B}{\partial t} +  \left( \frac{\partial B}{\partial x} \right)^T( f+M\sigma) \le  -W(t,x),  \\ \quad \forall x \in \mathcal{X},~~\forall t \in [t_N,T],\end{multline}
and
\begin{equation} \label{eq:controlability}
\left( \frac{\partial B}{\partial x} \right)^T g \neq 0, \quad \forall x \in \mathcal{X},~~\forall t \in [t_N,T],
\end{equation}
then the  controller
\begin{equation} \label{eq:controller}
u =g^T \frac{\partial  B}{\partial x} \left( (\frac{\partial  B}{\partial x})^T g g^T \frac{\partial  B}{\partial x} \right)^{-1} W(t,x),
\end{equation}
renders the solutions of \eqref{mainDIS} safe, i.e., $x(t) \notin \mathcal{X}_u$ for all $t \in [t_N,T]$.
\end{cor}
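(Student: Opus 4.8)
The plan is to reduce this statement to the safety analysis of Theorem~\ref{thm:barriersDifInq}: I would substitute the feedback law~\eqref{eq:controller} into the closed-loop dynamics and show that the resulting directional derivative of $B$ is nonpositive along every selection of the inclusion, which is exactly what the comparison argument of Theorem~\ref{thm:barriersDifInq} needs. First I would check that~\eqref{eq:controller} is well defined: condition~\eqref{eq:controlability} guarantees that the scalar $\left(\frac{\partial B}{\partial x}\right)^T g\, g^T \frac{\partial B}{\partial x} = \big\| \left(\frac{\partial B}{\partial x}\right)^T g \big\|^2$ is strictly positive on $[t_N,T]\times\mathcal{X}$, so the inverse in~\eqref{eq:controller} exists everywhere it is needed.

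Next, since $B \in \mathcal{C}^1$, the upper contingent derivative collapses to the classical directional derivative, i.e. $D_{+} B(t,x)(\dot{X}_{\pm},1) = \frac{\partial B}{\partial t} + \left(\frac{\partial B}{\partial x}\right)^T \dot{X}_{\pm}$, with $\dot{X}_{\pm} = f + g u \pm M\sigma$ coming from~\eqref{dssdsdcceeww}. The key computation is to insert~\eqref{eq:controller} and evaluate the control contribution $\left(\frac{\partial B}{\partial x}\right)^T g\, u$; writing $L_gB := \left(\frac{\partial B}{\partial x}\right)^T g$, the Moore--Penrose structure of the feedback yields $L_gB\, u = L_gB\,(L_gB)^T \big(L_gB\,(L_gB)^T\big)^{-1} W = W(t,x)$, so the control term reproduces $W$ exactly. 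This is the step I expect to be the main obstacle, not because it is deep but because it hinges on getting the matrix dimensions and the pseudoinverse cancellation right, and it is precisely where condition~\eqref{eq:controlability} is indispensable.

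With that identity in hand, I would substitute into the two directional derivatives and invoke the drift inequalities~\eqref{rwewdw2222wcv} and~\eqref{cccxzeew} to conclude
\begin{align*}
D_{+} B(t,x)(\dot{X}_{-},1) &= \tfrac{\partial B}{\partial t} + \left(\tfrac{\partial B}{\partial x}\right)^T(f - M\sigma) + W \le 0, \\
D_{+} B(t,x)(\dot{X}_{+},1) &= \tfrac{\partial B}{\partial t} + \left(\tfrac{\partial B}{\partial x}\right)^T(f + M\sigma) + W \le 0,
\end{align*}
for all $x \in \mathcal{X}$ and $t \in [t_N,T]$. Exactly as in the proof of the preceding (autonomous) corollary, linearity of $D_{+}$ in its direction together with the convex-combination identity $(\alpha_{-}\dot{X}_{-} + \alpha_{+}\dot{X}_{+},1) = \alpha_{-}(\dot{X}_{-},1)+\alpha_{+}(\dot{X}_{+},1)$, $\alpha_{-}+\alpha_{+}=1$, extends the bound to every $v \in co\{\dot{X}_{-}, \dot{X}_{+}\}$, so $D_{+} B(t,x)(v,1) \le 0$ along all closed-loop solutions.

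Finally, I would run the comparison argument of Theorem~\ref{thm:barriersDifInq}: the bound above forces $s \mapsto B(x(s),s)$ to be non-increasing on $[t_N,T]$, whence $B(x(s),s) - B(x(t_N),t_N) \le 0$ for every such $s$. Combined with the strict separation~\eqref{eq:Bcon12221}, which would demand this difference be strictly positive were the trajectory to reach $\mathcal{X}_u$, this yields $x(s)\notin \mathcal{X}_u$ for all $s\in[t_N,T]$ by contradiction. The one loose end I would flag is that~\eqref{eq:Bcon12221} is written at the terminal time $T$; to claim safety on the whole interval one reads it (or reapplies Theorem~\ref{thm:barriersDifInq}) at each intermediate time, and one should also confirm that the feedback, continuous wherever $L_gB\neq 0$ by~\eqref{eq:controlability}, keeps the closed-loop inclusion inside the Filippov well-posedness regime established earlier.
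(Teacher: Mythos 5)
Your proposal follows essentially the same route as the paper's proof: exploit smoothness of $B$ so that $D_{+}$ reduces to the classical directional derivative, substitute the feedback law and use the cancellation $\left(\frac{\partial B}{\partial x}\right)^T g\, u = W(t,x)$, invoke the drift inequalities~\eqref{rwewdw2222wcv} and~\eqref{cccxzeew}, extend the bound to the convex hull by linearity of $D_{+}$, and conclude via the comparison/contradiction argument of Theorem~\ref{thm:barriersDifInq}. You are in fact slightly more careful than the paper on two points it glosses over, namely the strict positivity of the scalar being inverted (which is exactly what~\eqref{eq:controlability} provides) and the fact that~\eqref{eq:Bcon12221} is stated only at the terminal time $T$ while the conclusion asserts safety on all of $[t_N,T]$.
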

\begin{proof}
The proof follows by applying Corollary~\ref{cordsdsds} to system~\eqref{dssdsdcceeww} with $W_{-}=W_{+}=W$. Inequality~\eqref{eq:Bcon12221} ensures that~\eqref{eq:Bcon11} holds. Computing the directional derivative of $B$ along the solutions of system~\eqref{dssdsdcceeww} and using the linearity property of the $D_{+}$ operator  we have 
\begin{multline} \label{cscscsC}
D_{+} B(t,x)\left(f(t)+g(t)u \pm M\sigma(t),1\right) \\ = \frac{\partial B}{\partial t} +  \left( \frac{\partial B}{\partial x} \right)^T( f+gu \pm M\sigma)  
\\= \frac{\partial B}{\partial t} +  \left( \frac{\partial B}{\partial x} \right)^T( f \pm M\sigma) + \left( \frac{\partial B}{\partial x} \right)^Tgu.
\end{multline}
Note that if $\left( \frac{\partial B}{\partial x} \right)^Tg = 0$, we cannot design  $u$ such that makes the last line of~\eqref{cscscsC} negative definite, i.e.,  to ensure safety. Noting that~\eqref{eq:controlability} holds and substituting the controller~\eqref{eq:controller} in the last line of~\eqref{cscscsC}, we obtain
\begin{multline} \label{opoepeore}
 \frac{\partial B}{\partial t} +  \left( \frac{\partial B}{\partial x} \right)^T( f \pm M\sigma) 
 \\+ \left( \frac{\partial B}{\partial x} \right)^Tg g^T \frac{\partial  B}{\partial x} \left( (\frac{\partial  B}{\partial x})^T g g^T \frac{\partial  B}{\partial x} \right)^{-1} W(t,x) \\
 =  \frac{\partial B}{\partial t} +  \left( \frac{\partial B}{\partial x} \right)^T( f \pm M\sigma) \\+ W(t,x) -W(t,x)+W(t,x) \le 0,
\end{multline}
where, in the last line inequality above, we used the fact that~\eqref{rwewdw2222wcv} and \eqref{cccxzeew} hold. Thus, 
$$
D_{+} B(t,x) ( f(t)+g(t) \pm M\sigma(t),1) \le 0.
$$ 
Let $0 \le \alpha_1,\alpha_2 \le 1$ satisfying $\alpha_1+\alpha_2=1$. Then, 
\begin{multline*}
\alpha_1 D_{+} B(t,x) ( f(t)+g(t) + M\sigma(t),1) 
\\+\alpha_2 D_{+} B(t,x) ( f(t)+g(t) - M\sigma(t),1) \\
= D_{+} B(t,x) \left( \alpha_1(f(t)+g(t) + M\sigma(t)),1 \right) \\+ D_{+} B(t,x) \left( \alpha_2(f(t)+g(t) - M\sigma(t)),1 \right) \\
= D_{+} B(t,x) \bigg( \alpha_1(f(t)+g(t) + M\sigma(t)) 
\\+\alpha_2(f(t)+g(t) - M\sigma(t)) ,1 \bigg) \le 0,
\end{multline*}
where in the last line we used ~\eqref{opoepeore}. This completes the proof.
\end{proof}



\section{Computational Method} \label{sec:compute}

In this section, we propose  computational methods to  address the analysis  and the synthesis problems. Piecewise polynomial interpolation leads to a computational formulation based on  polynomial optimization or  sum-of-squares 
 programs.

Assuming $\sigma \in \Sigma[t]$,  \eqref{mainDIS} becomes a differential inclusion with polynomial vector fields. The next lemma, which is based on the application of Putinar's Positivestellensatz~\cite{putinar99,S1052623403431779}, presents  conditions in terms of polynomial positivity that can be efficiently checked via semi-definite programs (SDPs)~\cite{Par00}. 
\begin{lem}\label{lem1}
Consider the differential inclusion~\eqref{mainDIS} with $u \equiv 0$ and the following semi-algebraic unsafe set 
\begin{equation} \label{dcscsdcscscs}
\mathcal{X}_u = \{ x \mid l_i(x) \le 0,~i=1,2,...,n_c\},
\end{equation}
where $l_i \in \mathcal{R}[x]$. If there exist functions $B \in \mathcal{R}[x,t]$, $W_{-} \in {\Sigma}[x,t]$, $W_{+} \in {\Sigma}[x,t]$, $m_i \in \Sigma[x,t]$, $i=1,2$, $s_i \in \Sigma[x,t]$, $i=1,\ldots,n_c$ and a positive constant $c>0$, such that
\begin{multline} \label{Con1111s}
B\left(x(T),T\right)-B\left(x(t_N),t_N\right)  \\+ \sum_{i=1}^{n_c} s_i\left(x(T)\right) l_i\left(x(T)\right) -c \in \Sigma \left[x(T)\right]
\end{multline}
and
\begin{multline} \label{rwewdwwcv1}
-\frac{\partial B}{\partial t} - \left( \frac{\partial B}{\partial x} \right)^T \dot{X}_{-}   -W_{-}(t,x) \\- m_1(t,x)(t-t_N)(t-T) \in  \Sigma[x,t],
\end{multline}
\begin{multline} \label{wqqwsszz}
-\frac{\partial B}{\partial t} - \left(\frac{\partial B}{\partial x}\right)^T \dot{X}_{+}  -W_{+}(t,x) \\- m_2(t,x)(t-t_N)(t-T) \in  \Sigma[x,t],
\end{multline}
then the solutions to~\eqref{mainDIS} satisfy $x(T) \notin \mathcal{X}_u$. 
\end{lem}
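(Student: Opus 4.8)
The plan is to show that the three sum-of-squares memberships \eqref{Con1111s}, \eqref{rwewdwwcv1}, and \eqref{wqqwsszz} imply, respectively, the terminal inequality \eqref{eq:Bcon11} and the two contingent-derivative inequalities \eqref{eq:Bcon22}--\eqref{eq:Bcon33} of the autonomous-case corollary, after which the conclusion $x(T) \notin \mathcal{X}_u$ follows immediately from that corollary. The one structural remark that makes this transfer legitimate is that $B \in \mathcal{R}[x,t]$ is a polynomial and hence smooth, so each contingent derivative $D_{+}B(t,x)(\dot{X}_{\pm},1)$ collapses to the gradient expression $\tfrac{\partial B}{\partial t} + (\tfrac{\partial B}{\partial x})^T \dot{X}_{\pm}$ appearing in \eqref{rwewdwwcv1}--\eqref{wqqwsszz}, exactly as recorded in the definition of $D_{+}$ for the Gateaux-differentiable case.

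First I would treat the terminal condition. Membership in $\Sigma[x(T)]$ in \eqref{Con1111s} means the displayed polynomial is nonnegative for every value of $x(T)$, so that
$$
B(x(T),T) - B(x(t_N),t_N) \ge c - \sum_{i=1}^{n_c} s_i(x(T))\, l_i(x(T)).
$$
Restricting to the unsafe set \eqref{dcscsdcscscs}, each $l_i(x(T)) \le 0$ while each multiplier $s_i \in \Sigma$ is nonnegative, hence $\sum_i s_i l_i \le 0$ and the right-hand side is at least $c > 0$. This delivers the strict inequality \eqref{eq:Bcon11}.

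Next I would dispatch the two derivative conditions by the standard Positivstellensatz localization to the interval $[t_N,T]$. Setting $p_{-}(x,t) = -\tfrac{\partial B}{\partial t} - (\tfrac{\partial B}{\partial x})^T \dot{X}_{-} - W_{-}(t,x)$, condition \eqref{rwewdwwcv1} states that $p_{-}$ differs from an element of $\Sigma[x,t]$ by the localizing term built from $(t-t_N)(t-T)$. Since this factor is sign-definite on $[t_N,T]$ and the multiplier $m_1 \in \Sigma$ is nonnegative, the localizing term is sign-definite there, and combining this with nonnegativity of the sum-of-squares remainder forces $p_{-}(x,t) \ge 0$ for all $x \in \mathcal{X}$ and all $t \in [t_N,T]$, which is precisely \eqref{eq:Bcon22}. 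The identical argument applied to \eqref{wqqwsszz} with the multiplier $m_2$ yields \eqref{eq:Bcon33}.

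With \eqref{eq:Bcon11}, \eqref{eq:Bcon22}, and \eqref{eq:Bcon33} in hand, I would invoke the autonomous-case corollary to conclude that every solution of \eqref{mainDIS} satisfies $x(T) \notin \mathcal{X}_u$, completing the proof. The step I expect to be the main obstacle is the Positivstellensatz localization: one must pin down the sign convention on $(t-t_N)(t-T)$ so that the nonnegative multiplier certifies positivity of $p_{\pm}$ on $[t_N,T]$ rather than on its complement, and one must reconcile the \emph{positive-definiteness} of $W_{\pm}$ required by the corollary with the merely \emph{sum-of-squares} (hence only positive-semidefinite) $W_{\pm}$ permitted by the lemma, which may call for a strict-positivity margin analogous to the constant $c$ used in \eqref{Con1111s}. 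The remaining manipulations are routine.
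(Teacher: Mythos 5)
Your proposal is correct and takes essentially the same route as the paper's own proof: the S-procedure argument for the terminal condition, the remark that a polynomial $B$ is smooth so that $D_{+}B(t,x)\left(\dot{X}_{\pm},1\right)$ collapses to $\frac{\partial B}{\partial t} + \left(\frac{\partial B}{\partial x}\right)^T \dot{X}_{\pm}$, interval localization for \eqref{rwewdwwcv1} and \eqref{wqqwsszz}, and a concluding appeal to Corollary~\ref{cordsdsds}. The two obstacles you flag at the end are genuine, but they are defects of the lemma's formulation (which the paper's own proof silently glosses over) rather than of your argument: as printed, subtracting $m_i(t,x)(t-t_N)(t-T)$ certifies nonnegativity of the bracketed expression on the complement of $[t_N,T]$, so the localizer should be $(t-t_N)(T-t)$ (equivalently, the term should enter with a plus sign); and the positive definiteness of $W_{\pm}$ demanded by Corollary~\ref{cordsdsds} is exploited only through nonnegativity in the proof of Theorem~\ref{thm:barriersDifInq}, so sum-of-squares (hence merely positive semidefinite) $W_{\pm}$ indeed suffice.
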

\begin{proof}
Applying Putinar's Positivstellensatz, condition~\eqref{Con1111s} implies that 
$$
B\left(x(T),T\right)-B\left(x(t_N),t_N\right)>0,
$$
for all $x(T) \in \mathcal{X}_u$ as in~\eqref{eq:Bcon11}. Thus, inequality~\eqref{eq:Bcon11} holds. Moreover, since $B \in \mathcal{R}[x,t]$ and thus smooth, we have
$$
D_{+} B(t,x)\left(\dot{X}_{-},1\right) = \frac{\partial B}{\partial t} + \left( \frac{\partial B}{\partial x}\right)^T \dot{X}_{-} .
$$
Hence, from condition~\eqref{rwewdwwcv1}, we have
 $$
 D_{+} B(t,x)\left(\dot{X}_{-},1\right) \le -W_{-}(t,x), \quad t \in [t_N,T].
 $$
 Therefore, inequality~\eqref{eq:Bcon22} is satisfied. In a similar manner, we can show that \eqref{eq:Bcon33} holds as well. Analogously, we can show that~\eqref{wqqwsszz} implies that \eqref{eq:Bcon33} is satisfied. Then, from Corollary~\ref{cordsdsds}, the solutions to~\eqref{mainDIS} satisfy $x(T) \notin \mathcal{X}_u$.
\end{proof}

We now focus on formulating sum of squares conditions to synthesize a safe controller as highlighted in Corollary 2. The following result can be proved by a direct application of Positivstellensatz. 

\begin{lem}\label{lem5651}
Consider the differential inclusion~\eqref{mainDIS} and the following semi-algebraic unsafe set 
\begin{equation} \label{dcscsdcscscs}
\mathcal{X}_u = \{ x \mid l_i(x) \le 0,~i=1,2,...,n_c\},
\end{equation}
where $l_i \in \mathcal{R}[x]$. If there exist functions $B \in \mathcal{R}[x,t]$, $W \in {\Sigma}[x,t]$, $m_i \in \Sigma[x,t]$, $i=1,2$, $s_i \in \Sigma[x,t]$, $i=1,\ldots,n_c$, $q_1 \in  \mathcal{R}[x,t]$, $q_2 \in \Sigma[x,t]$ and a positive constant $c>0$ such that
\begin{multline} \label{Con1111}
B\left(x(T),T\right)-B\left(x(t_N),t_N\right)  \\+ \sum_{i=1}^{n_c} s_i\left(x(T)\right) l_i\left(x(T)\right) -c \in \Sigma \left[x(T)\right],
\end{multline}
\begin{multline} \label{rwewdwwcv}
-\frac{\partial B}{\partial t} - \left( \frac{\partial B}{\partial x} \right)^T( f-M\sigma) 
\\-W(t,x) - m_1(t,x)(t-t_N)(t-T) \in  \Sigma[x,t],
\end{multline}
\begin{multline}
-\frac{\partial B}{\partial t} - \left( \frac{\partial B}{\partial x} \right)^T( f+M\sigma) \\-W(t,x) - m_2(t,x)(t-t_N)(t-T) \in  \Sigma[x,t],
\end{multline}
and
\begin{equation} \label{eq:bilinearB}
-q_1  \left( \left( \frac{\partial B}{\partial x} \right)^T g \right) - q_2 \in \Sigma[x,t],
\end{equation}
then the solutions to~\eqref{mainDIS} satisfy $x(T) \notin \mathcal{X}_u$. 
\end{lem}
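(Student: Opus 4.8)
The plan is to show that each of the sum-of-squares conditions \eqref{Con1111}, \eqref{rwewdwwcv}, its $f+M\sigma$ counterpart, and \eqref{eq:bilinearB} is a Positivstellensatz certificate for the corresponding hypothesis \eqref{eq:Bcon12221}, \eqref{rwewdw2222wcv}, \eqref{cccxzeew}, and \eqref{eq:controlability} of the controller-synthesis result of Corollary~\ref{cordsdsds}; once all four hypotheses are in force, Corollary~\ref{cordsdsds} supplies the feedback law \eqref{eq:controller} and guarantees $x(T)\notin\mathcal{X}_u$. This mirrors, condition by condition, the argument already used for the analysis statement in Lemma~\ref{lem1}, the only genuinely new ingredient being the treatment of the non-vanishing requirement \eqref{eq:controlability}.

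First I would dispense with the boundary condition exactly as in Lemma~\ref{lem1}: on $\mathcal{X}_u=\{x\mid l_i(x)\le0\}$ we have $l_i\le0$ and $s_i\in\Sigma$, so $\sum_i s_i l_i\le0$, and membership of the left-hand side of \eqref{Con1111} in $\Sigma$ forces $B(x(T),T)-B(x(t_N),t_N)\ge c-\sum_i s_i l_i\ge c>0$, which is precisely \eqref{eq:Bcon12221}. For the two derivative inequalities I would use that $B\in\mathcal{R}[x,t]$ is smooth, so the contingent derivative collapses to $D_{+}B(t,x)(f\mp M\sigma,1)=\frac{\partial B}{\partial t}+\left(\frac{\partial B}{\partial x}\right)^T(f\mp M\sigma)$. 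The multiplier terms $m_i(t,x)(t-t_N)(t-T)$ act as an S-procedure localizer: since $(t-t_N)(t-T)\le0$ on $[t_N,T]$ and $m_i\in\Sigma$, membership in $\Sigma$ of the expression in \eqref{rwewdwwcv} and of its $f+M\sigma$ counterpart restricts the inequalities to $t\in[t_N,T]$ and yields $\frac{\partial B}{\partial t}+\left(\frac{\partial B}{\partial x}\right)^T(f\mp M\sigma)\le -W(t,x)$ there, i.e.\ \eqref{rwewdw2222wcv} and \eqref{cccxzeew}. (If $\mathcal{X}\subsetneq\mathbb{R}^n$, one additionally appends SOS multipliers for the defining polynomials of $\mathcal{X}$, in the same manner.)

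The hard part will be \eqref{eq:bilinearB}, namely certifying the \emph{open} condition \eqref{eq:controlability} that $\left(\frac{\partial B}{\partial x}\right)^Tg$ never vanishes on $\mathcal{X}\times[t_N,T]$ --- equivalently that the scalar $\left(\frac{\partial B}{\partial x}\right)^Tgg^T\frac{\partial B}{\partial x}=\|g^T\frac{\partial B}{\partial x}\|^2$ stays positive, which is exactly what makes the denominator of the controller \eqref{eq:controller} invertible. Non-vanishing is not a closed semi-algebraic condition and cannot be certified by sum-of-squares directly; the device in \eqref{eq:bilinearB} must be read as an S-procedure/Positivstellensatz encoding of a \emph{definite sign}, with $q_1\in\mathcal{R}[x,t]$ a free-sign multiplier and $q_2\in\Sigma[x,t]$ a slack. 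I would make the argument rigorous by enforcing strictness --- requiring the certificate to dominate a positive constant, e.g.\ $\left(\frac{\partial B}{\partial x}\right)^Tgg^T\frac{\partial B}{\partial x}-q_2-c\in\Sigma$ with $c>0$ --- so that at any hypothetical zero of $\left(\frac{\partial B}{\partial x}\right)^Tg$ the certificate would force $-q_2\ge c>0$, contradicting $q_2\ge0$. This strict form rules out zeros on the region and delivers \eqref{eq:controlability}.

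Finally, with \eqref{eq:Bcon12221}, \eqref{rwewdw2222wcv}, \eqref{cccxzeew}, and \eqref{eq:controlability} all established, I would simply invoke the controller-synthesis result of Corollary~\ref{cordsdsds}: the feedback law \eqref{eq:controller} is well defined and renders the solutions of \eqref{mainDIS} safe, so in particular $x(T)\notin\mathcal{X}_u$, completing the proof. The only point demanding attention beyond the bookkeeping inherited from Lemma~\ref{lem1} is the strictness needed in the non-vanishing certificate, which I flagged above as the main obstacle.
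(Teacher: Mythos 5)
Your overall route is exactly the one the paper intends: the paper's entire ``proof'' of Lemma~\ref{lem5651} is the single sentence that it ``can be proved by a direct application of Positivstellensatz,'' and your reduction --- reading \eqref{Con1111}, \eqref{rwewdwwcv}, its $f+M\sigma$ counterpart, and \eqref{eq:bilinearB} as certificates for the four hypotheses \eqref{eq:Bcon12221}, \eqref{rwewdw2222wcv}, \eqref{cccxzeew}, \eqref{eq:controlability} of the controller-synthesis corollary, which then supplies the feedback law \eqref{eq:controller} and hence $x(T)\notin\mathcal{X}_u$ --- is precisely the template of the paper's proof of Lemma~\ref{lem1} transplanted to the synthesis setting. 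Your treatment of \eqref{Con1111} is correct, and your diagnosis of \eqref{eq:bilinearB} is an important catch that the paper never makes: as literally stated the condition is vacuous, since for \emph{any} $B$ one may take $q_1=-\left(\frac{\partial B}{\partial x}\right)^Tg$ and $q_2=0\in\Sigma[x,t]$, giving $-q_1\left(\left(\frac{\partial B}{\partial x}\right)^Tg\right)-q_2=\left(\left(\frac{\partial B}{\partial x}\right)^Tg\right)^2\in\Sigma[x,t]$ even when $\left(\frac{\partial B}{\partial x}\right)^Tg$ vanishes identically. Your strictness repair (requiring the certificate to dominate a positive constant) is the right fix; the paper's remark about fixing $q_1$ in practice does not close this logical gap.

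The one step of your write-up that does not hold as stated is the S-procedure for the time interval, and here you inherit a sign error from the paper rather than catch it. Membership of $p-m_i(t,x)(t-t_N)(t-T)$ in $\Sigma[x,t]$ (with $p$ denoting $-\frac{\partial B}{\partial t}-\left(\frac{\partial B}{\partial x}\right)^T(f\mp M\sigma)-W$) gives pointwise $p\ge m_i(t-t_N)(t-T)$, and on $[t_N,T]$ the right-hand side is \emph{nonpositive}, so nothing follows there; your justification ``since $(t-t_N)(t-T)\le 0$ on $[t_N,T]$ and $m_i\in\Sigma$\dots'' asserts the implication but does not establish it. A concrete counterexample: $p(t)=(t-t_N)(t-T)$ satisfies the condition with $m_i=1$ and $\sigma_0=0$, yet $p<0$ on the open interval. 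The localizer must enter with the opposite sign, $p+m_i(t-t_N)(t-T)\in\Sigma[x,t]$, equivalently $p-m_i(t-t_N)(T-t)\in\Sigma[x,t]$, so that on $[t_N,T]$ one gets $p\ge m_i(t-t_N)(T-t)\ge 0$; note the paper's unsafe-set multiplier $+\sum_i s_i l_i$ in \eqref{Con1111} follows the correct convention, which makes the interval term's sign an inconsistency in the statement itself. With that sign corrected and your strict version of \eqref{eq:bilinearB}, your argument is complete and sound.
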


Note that sum of squares constraint~\eqref{eq:bilinearB} is bilinear in variables $B$ and $q_1$. In practice, we fix the polynomial $q_1$ and look for a barrier certificate such that \eqref{Con1111} to \eqref{eq:bilinearB} are satisfied. In many cases such as the examples given in Section~\ref{sec:numresults}, this constraint holds for a computed barrier certificate $B$. 

\section{Including Side Information: Algebraic and Integral Inequalities} \label{sec:side}

In addition to the regularity side information discussed so far, we consider physical conservation laws in the form of algebraic and integral inequalities (note that we can always represent an equality with two inequalities). The side information can be used to restrict the set of predictions we make regarding the dynamics of the system at a future point in time.  For instance, consider the two-dimensional motion of a simple pendulum  of fixed length $\ell>0$ and a bob of mass $m$ (see Figure~\ref{fig:pendul}). Let $x$ and $y$ represent the horizontal and vertical positions of the bob, which also account for the states of the system. Since the length of pendulum is fixed (non-elastic), the states always satisfy the following equality (conservation of length)
$$
x^2+y^2= \ell^2.
$$
In the following, we show how we can use such physical side information to obtain less conservative predictions. Formally, we consider  side information that can be represented as
\begin{multline}\label{hardiqcs}
\mathcal{C} = \bigg \{ (t,x,u) \mid \phi_i(t,x,u)\ge0,~i \in I, \\
\int_0^T \psi_j(t,x,u)~dt \ge0,~j \in J,~T>0 \bigg\},
\end{multline}
where $I$ and $J$ are index sets and $\phi_i,~i \in I$ and $\psi_j,~j \in J$ are continuous functions. 

\begin{figure}
  \centering
\begin{tikzpicture}[thick,>=latex,->,scale=0.7]

\begin{scope}
\clip(-5,2) rectangle (5,-5);

\draw[dashed] (0,0)  circle (4.24cm);
\filldraw[white] (-4.3,4.3) rectangle (4.3,0);
\draw[double distance=1.6mm] (0,0) -- (3,-3) node[midway,xshift=4mm,yshift=2mm]{$\ell$};
\draw[->] (3,-3) -- (3,-4.5) node[below]{$m g$};
\draw[fill=white] (-1.2,1.0) -- (-.5,0) arc(180:360:0.5) -- (1.2,1.0) -- cycle;
\draw[draw=black,fill=white] (0, 0) circle circle (.3cm);
\draw[draw=black,fill=white] (3,-3) circle circle (.3cm);
\draw[->] (.6,0) -- (2,0) node[below]{{$x$}};
\draw[->] (0,-.6) -- (0,-2) node[below]{{$y$}};
\draw[pattern=north east lines] (-1.4,1.3) rectangle (1.4,1);
\end{scope}

\end{tikzpicture}
\caption{Schematic diagram of the two dimensional pendulum.} \label{fig:pendul}
\end{figure}
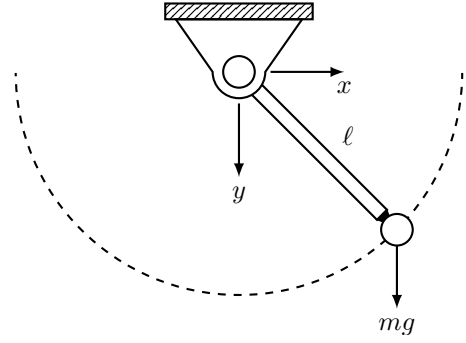

In the case in which $\psi_j$'s are  quadratic functions of $x$ and/or $u$, they can represent (hard) integral quadratic constraints, which can be used to model system properties such as passivity, induced input-output norms, saturation and etc. In the absence of inputs, we have
\begin{multline} \label{eq:sideauton}
\mathcal{C}_0 = \bigg \{ (t,x) \mid \phi_i(t,x)\ge 0,~i \in I, \\
\int_0^T \psi_j(t,x)~dt\ge0,~j \in J,~T>0 \bigg\}.
\end{multline}

We can incorporate  side information in the form of~\eqref{eq:sideauton}  as additional constraints. To illustrate, conditions of Corollary~\ref{cordsdsds} can be rewritten~as 

\begin{equation}\label{eq:Bcon11cccx}
B\left(x(T),T\right)-B\left(x(t_N),t_N\right) >0, \quad x(T) \in \mathcal{X}_u,
\end{equation}
\begin{equation} \label{eq:Bconcxssssccxc22}
D_{+} B(t,x)\left(\dot{X}_{-},1\right) \le -W_{-}(t,x),~~ t \in [t_N,T],~~(t,x) \in \mathcal{C}_0,  
\end{equation}
and
\begin{equation} 
D_{+} B(t,x)\left(\dot{X}_{+},1\right) \le -W_{+}(t,x),~~ t \in [t_N,T],~~(t,x) \in \mathcal{C}_0.
\end{equation}

The next result gives a sum of squares formulation to the above inequalities and therefore allows us to assess the safety of the system given the side information. 

\begin{lem}\label{lem2}
Consider the differential inclusion~\eqref{mainDIS} with $u \equiv 0$, the semi-algebraic unsafe set
$$
\mathcal{X}_u = \{ x \mid l_i(x) \le 0,~i=1,2,...,n_c\},
$$
 and side information~\eqref{eq:sideauton} where $\phi_i \in \mathcal{R}[x,t],~i \in I$, $\psi_j\in \mathcal{R}[x,t],~j\in J$, and $l_i \in \mathcal{R}[x]$. If there exist functions $B \in \mathcal{R}[x,t]$, $W_{-} \in {\Sigma}[x,t]$, $W_{+} \in {\Sigma}[x,t]$, $m_i \in \Sigma[x,t]$, $i=1,2$, $s_i \in \Sigma[x,t]$, $i=1,\ldots,n_c$, $p_i, \hat{p}_i \in \Sigma[x,t]$, $i=1,\ldots,n_s$, and positive constants $q_i,\hat{q}_i$, $i=1,\ldots,n_{si}$ and $c>0$ such that
\begin{multline} \label{Con11211}
B\left(x(T),T\right)-B\left(x(t_N),t_N\right)  \\+ \sum_{i=1}^{n_c} s_i\left(x(T)\right) l_i\left(x(T)\right) -c \in \Sigma \left[x(T)\right]
\end{multline}
and
\begin{multline} \label{rwewdw2wcv}
-\frac{\partial B}{\partial t} - \left( \frac{\partial B}{\partial x}\right)^T \dot{X}_{-}  -W_{-}(t,x) - m_1(t,x)(t-t_N)(t-T) \\-\sum_{i=1}^{n_s} p_i(t,x) \phi_i(t,x)  -\sum_{j=1}^{n_{si}} q_j\psi_j (t,x) \in  \Sigma[x,t],
\end{multline}
\begin{multline}
-\frac{\partial B}{\partial t} - \left( \frac{\partial B}{\partial x}\right)^T \dot{X}_{+}  -W_{+}(t,x) - m_2(t,x)(t-t_N)(t-T) 
\\-\sum_{i=1}^{n_s} \hat{p}_i(t,x) \phi_i(t,x)  -\sum_{j=1}^{n_{si}} \hat{q}_j\psi_j (t,x) \in  \Sigma[x,t],
\end{multline}
then the solutions to~\eqref{mainDIS} satisfy $x(t) \notin \mathcal{X}_u$, for all $t \in [t_N,T]$. 
\end{lem}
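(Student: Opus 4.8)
The plan is to treat Lemma~\ref{lem2} as the side-information-augmented counterpart of Lemma~\ref{lem1}: I would keep the overall skeleton of that proof and only account for the two new families of multiplier terms, namely the algebraic terms $p_i\phi_i$ (resp. $\hat p_i\phi_i$) and the integral terms $q_j\psi_j$ (resp. $\hat q_j\psi_j$). Concretely, I would invoke Putinar's Positivstellensatz to turn each sum-of-squares membership into the corresponding inequality of the augmented conditions \eqref{eq:Bcon11cccx}--\eqref{eq:Bconcxssssccxc22}, now restricted to the side-information set $\mathcal{C}_0$, and then close the argument exactly as in Theorem~\ref{thm:barriersDifInq} and Corollary~\ref{cordsdsds}.

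First I would dispatch the separation condition. Exactly as for \eqref{Con1111s} in Lemma~\ref{lem1}, the membership \eqref{Con11211} together with Putinar's Positivstellensatz yields $B(x(T),T)-B(x(t_N),t_N)>0$ for every $x(T)\in\mathcal{X}_u$, i.e. \eqref{eq:Bcon11cccx}. Next, since $B\in\mathcal{R}[x,t]$ is smooth, the contingent derivative collapses to the classical one, $D_{+}B(\dot X_{-},1)=\frac{\partial B}{\partial t}+(\frac{\partial B}{\partial x})^T\dot X_{-}$. In the membership \eqref{rwewdw2wcv} the factor $m_1(t-t_N)(t-T)$ is the usual interval localizer that confines the inequality to $t\in[t_N,T]$, while the multipliers $p_i\in\Sigma$ paired with $\phi_i\ge0$ implement the standard S-procedure for the \emph{algebraic} side constraints. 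Reading off these nonnegative contributions on $[t_N,T]\cap\{\phi_i\ge0\}$ leaves the pointwise inequality $D_{+}B(\dot X_{-},1)\le -W_{-}(t,x)-\sum_j q_j\psi_j(t,x)$, and symmetrically $D_{+}B(\dot X_{+},1)\le -W_{+}(t,x)-\sum_j \hat q_j\psi_j(t,x)$ from the analogous condition for $\dot X_{+}$.

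The hard part will be the \emph{integral} constraints, because a pointwise sum-of-squares certificate cannot by itself encode $\int_0^T\psi_j\,dt\ge0$. The key observation I would exploit is that each $q_j$ (and $\hat q_j$) is a positive \emph{constant}, not an SOS polynomial, so it factors out of any time integral. Mirroring the comparison-theorem step of Theorem~\ref{thm:barriersDifInq}, I would integrate the pointwise inequality above along a solution from $t_N$ to $s\in[t_N,T]$, obtaining $B(x(s),s)-B(x(t_N),t_N)\le -\int_{t_N}^s W_{-}\,d\tau-\sum_j q_j\int_{t_N}^s\psi_j\,d\tau$. Here $W_{-}\ge0$ makes the first integral nonpositive, and $q_j\ge0$ together with the side-information bound $\int\psi_j\,d\tau\ge0$ makes the second sum nonpositive as well, so $B(x(s),s)\le B(x(t_N),t_N)$; the constant multipliers are precisely what makes this step legitimate.

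Finally I would recombine the two extreme vector fields. Taking the convex combination $\alpha_{-}\dot X_{-}+\alpha_{+}\dot X_{+}$ with $\alpha_{-}+\alpha_{+}=1$ and using linearity of $D_{+}$ exactly as in Corollary~\ref{cordsdsds} propagates the bound to every $v\in co\{\dot X_{-},\dot X_{+}\}$, hence to all solutions of \eqref{mainDIS}. Combined with the separation \eqref{eq:Bcon11cccx}, assuming $x(T)\in\mathcal{X}_u$ forces $B(x(T),T)>B(x(t_N),t_N)$ while the integrated bound forces $B(x(T),T)\le B(x(t_N),t_N)$, a contradiction; repeating the argument for each intermediate horizon gives $x(t)\notin\mathcal{X}_u$ for all $t\in[t_N,T]$. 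I expect the only genuine subtlety, beyond bookkeeping, to be the integral-constraint step, and in a careful writeup I would also want to pin down the integration limits so that the hypothesis $\int_0^T\psi_j\,dt\ge0$ actually delivers the sign of $\int_{t_N}^s\psi_j\,d\tau$ needed above.
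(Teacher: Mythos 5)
Your overall route is exactly the one the paper intends: in fact the paper states Lemma~\ref{lem2} \emph{without} an explicit proof, presenting it as the direct Positivstellensatz analogue of Lemma~\ref{lem1} applied to the side-information-restricted conditions \eqref{eq:Bcon11cccx}--\eqref{eq:Bconcxssssccxc22}, which is precisely your plan (Putinar for the separation condition, interval localizer plus S-procedure multipliers $p_i\phi_i$ for the algebraic constraints, comparison theorem plus convexity of $D_{+}$ for the inclusion). Your observation about the integral terms is also sharp and correct: a pointwise SOS certificate with constant multipliers $q_j$ only becomes useful after integration along the trajectory, and the hypothesis $\int_0^T\psi_j\,dt\ge 0$ does not deliver $\int_{t_N}^{s}\psi_j\,d\tau\ge 0$ for $s\in[t_N,T]$; one needs the hard-IQC reading of \eqref{eq:sideauton} over the horizon actually being integrated (i.e.\ from $t_N$, for every intermediate endpoint $s$). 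That mismatch is a defect of the paper's statement, and flagging it is the right call.

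Two genuine gaps remain in your writeup. First, the concluding step ``repeating the argument for each intermediate horizon'' is not available from the stated hypotheses: the separation certificate \eqref{Con11211} is an SOS condition in the dummy variable $x(T)$ with the time argument frozen at $T$, so it yields $B(z,T)>B(x(t_N),t_N)$ for $z\in\mathcal{X}_u$ and says nothing about $B(z,t^*)$ for $t^*<T$. Hence your contradiction only rules out $x(T)\in\mathcal{X}_u$ --- the Lemma~\ref{lem1}-type conclusion --- not the claim ``$x(t)\notin\mathcal{X}_u$ for all $t\in[t_N,T]$'' in the statement; certifying the stronger claim would require the separation inequality with $t$ as a free variable localized to $[t_N,T]$. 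This over-claim is inherited from the paper, but your proof silently papers over it rather than either proving it or noting it cannot follow. Second, the order of your last two steps matters: a solution of \eqref{mainDIS} has velocity $\alpha_{-}(t)\dot X_{-}+\alpha_{+}(t)\dot X_{+}$ with time-varying weights, so the convex combination must be taken pointwise \emph{before} integrating. Doing so produces the bound $-W-\sum_j\bigl(\alpha_{-}(t)q_j+\alpha_{+}(t)\hat q_j\bigr)\psi_j$, whose integral-constraint coefficient is no longer a constant; the factor-out-the-constant trick you rely on then fails unless $q_j=\hat q_j$ for each $j$ (or the $\psi_j$ have a definite sign), since $\int c(\tau)\psi_j\,d\tau$ can be negative even when $\int\psi_j\,d\tau\ge0$ and $c(\tau)>0$. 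Your writeup integrates as if the trajectory followed $\dot X_{-}$ alone, which is not the case for solutions of the inclusion.
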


Note that the side information requires inequalities~\eqref{eq:Bconcxssssccxc22} to hold in a smaller set; therefore, it makes it easier to find the barrier function. However, from a computational standpoint, as the number of inequalities in~\eqref{eq:sideauton} increases, we have to add more variables to the sum of squares program. Similarly, we can formulate sum of squares conditions for controller synthesis with side information~\eqref{hardiqcs} as described in Lemma~\ref{lem5651}.

\section{Numerical Results} \label{sec:numresults}

In this section, we illustrate the proposed method using two examples. The first example is a single state system for which limited data is available and safety in a future time is of interest. The second example is a safe-landing scenario for an aircraft  with critical failure. In the following examples, we used the parser  SOSTOOLs~\cite{PAVPSP13} to cast the polynomial inequalities into semidefinite programs and then we used Sedumi~\cite{Stu98}  to solve the resultant SDPs.

\subsection{Example I}

We consider 20 samples of the solution to the following differential equation
\begin{eqnarray}\label{dccsxsew}
\dot{x}& =& 0.5x^2-0.05x^3+u, \nonumber \\ 
x(0) &=& 1,
\end{eqnarray}
in the interval $0<t<1.7$ when $u \equiv 0$ (see Figure~\ref{L2curve}). The regularity information is given as $\|x\|_{\mathcal{C}^2} \le 5$ and 
$$
|\dot{X}(t) - \dot{x}(t) | \le 5,
$$
which implies that $M=5$ and $\sigma(t)=1$. We use a cubic piecewise-polynomial approximation  of $x(t)$. This can be carried out readily by the \texttt{spline} function in MATLAB. 

The unsafe set  is given by 
$$
\mathcal{X}_u = \left\{ x \in \mathbb{R} \mid x-9 \ge 0    \right\}.
$$
The boundary of the unsafe set $x=9$, the data points $\{x_i\}_{t=1}^{20}$ and the piecewise-polynomial approximation of the state $X(t) = \sum_i \beta_i Q_{i,p}(t)$ are shown in Figure~\ref{L2curve}. As it can be observed from the figure, since there exists an stable equilibrium at $x=10$, the solution of the actual system converges to the equilibrium at $t \approx 3$. However, since the piecewise-polynomial approximation of the state is based on the information up to $t=1.7$, $X(t)$ differs from $x(t)$ as time passes. Nonetheless, the data-driven differential inclusion~\eqref{mainDIS} provides an approximation of the state evolutions for $t>1.7$.

\begin{figure}
  \centering
  \includegraphics[scale=.5]{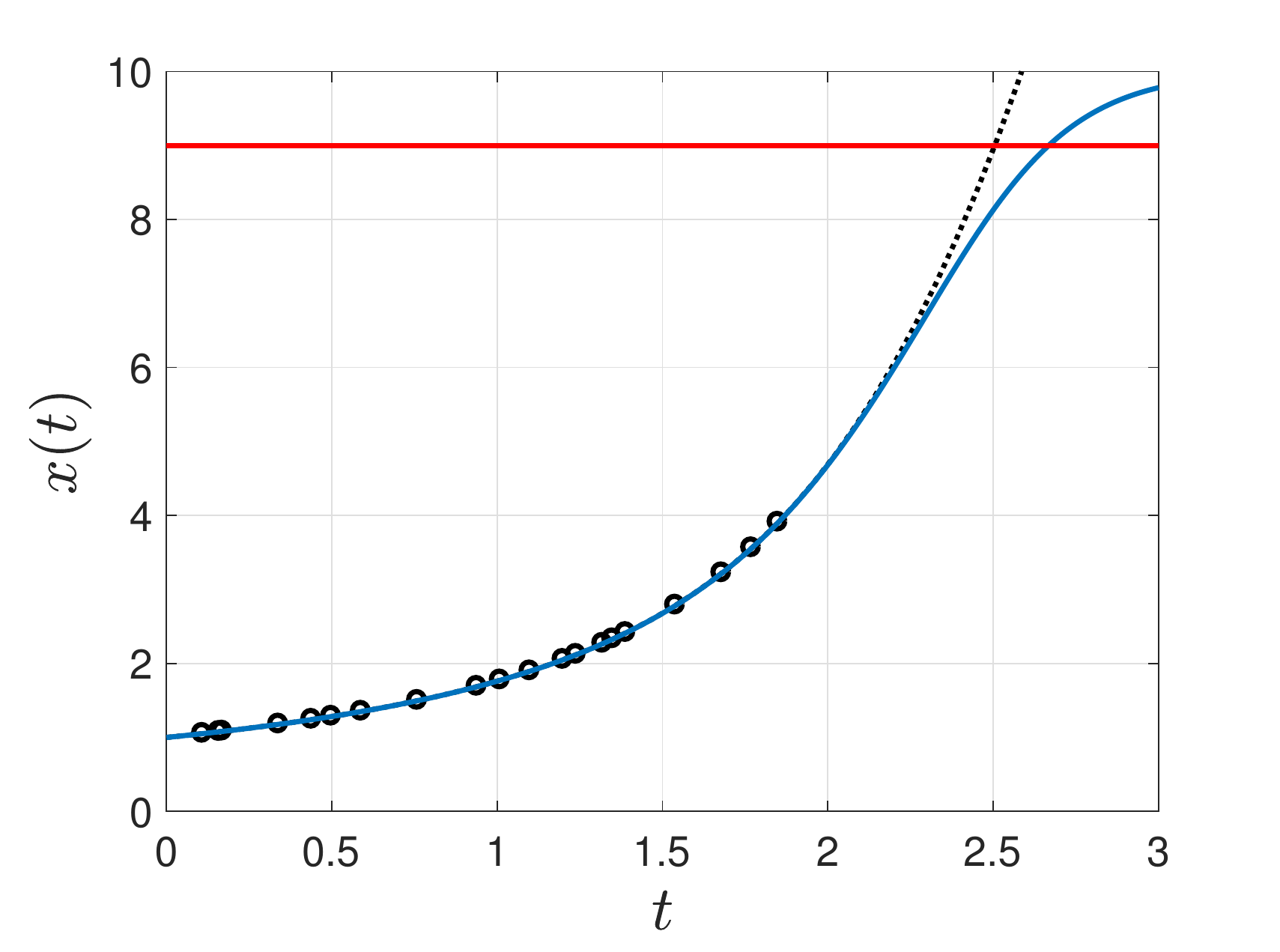}\\
  \caption{The boundary of the unsafe set $x=9$ (red line), the data points $\{x(t_i)\}_{i=1}^{20}$ (black circles), the piecewise-polynomial approximation of the state $X(t)$ (black dots) and the actual solution of the system (solid blue). }\label{L2curve}
\end{figure}

\begin{table}[!b]
\caption{Numerical results for 20 samples and $t_N=2$.}
\label{t11}
\centering
\begin{tabular}{c||c|c|c|c|c|c}

\bfseries deg & 1 & 2 & 3 & 4 & 5 & 6  \\
\hline
\bfseries $T$ & 2.26 & 2.34 & 2.41 & 2.45 & 2.46 & 2.49   \\
\end{tabular}
\end{table}

 In this example, we are interested in finding the maximum $T$ for which the solutions become unsafe, i.e., $x(T) \ge 9$. To this end, based on Corollary~\ref{lem1}, we increase the value of $T$ and look for a barrier certificate. We continue until no barrier certificate can be found. Table~\ref{t11} provides the numerical results. Notice that the actual system become unsafe at $T=2.66$. However, due to system uncertainty and limited data, the lower bound on the unsafe set has been found  to be $T=2.49$ corresponding to certificates of degree 6. The barrier certificate of degree 3 is given bellow
\begin{multline}
B(t,x) = -0.496t^3+0.119t^2x + 0.0449t^2 - 0.0383tx^2 
          \\-0.5855tx - 0.8398t + 0.1063x^2 +1.389x. \nonumber
\end{multline}

At this point, we consider 20 samples of the solution to the differential equation~\eqref{dccsxsew} from $t \in [0,2]$ and we allow one seconds for the computations. Thus, the safe controller would kick in at around $t=3$.  
The values of $u$ for learning phase are drawn from $10\sin(t)$. The unsafe set given by $\mathcal{X}_u = \left\{ x \in \mathbb{R} \mid x-10 \ge 0    \right\}$ and we are interested in making sure that the system solutions remain safe from $[3,10]$, i.e., $T=10$. Figure~\ref{L2ddcurve} shows the application of the designed safe controller based on Corollary 2. As it can be observed as the safe controller starts acting on the system, the solutions become safe for $t \ge 3$. The computed certificates $B$ and $W$ of degree 2 in the synthesis problem are given below 
\begin{multline}
B(t,x) = 0.3845t^2 - 0.0002269tx - 1.594t \\+ 0.0006x^2  + 0.0001038x,  \nonumber
\end{multline}
\begin{multline}
W(t,x) = 0.1664t^2  - 0.6558t + 1.5543x^2  + 0.6664
. \nonumber
\end{multline}

\begin{figure}
  \centering
  \includegraphics[scale=.46]{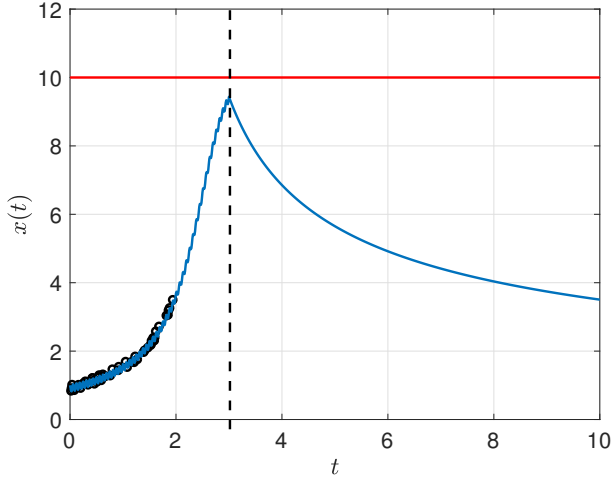}\\
  \caption{The boundary of the unsafe set $x=10$ (red line), the data points $\{x(t_i)\}_{i=1}^{20}$ (black circles),  and the  solution of the system (solid blue). The safe controller starts operating at $t=3$.}\label{L2ddcurve}
\end{figure}

\subsection{Example II: Aircraft Landing}

\begin{figure}
  \centering
  \includegraphics[scale=.4]{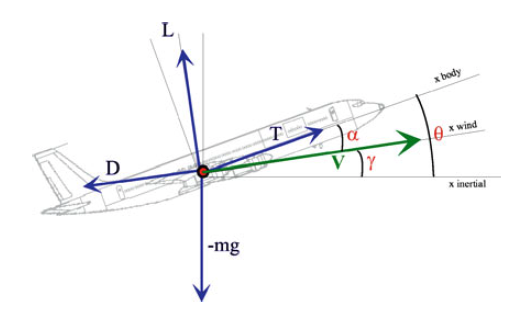}\\
  \caption{Point mass force diagram for the longitudinal dynamics of the aircraft~\cite[p. 108]{viabbookbayern}. }\label{figplane}
\end{figure}


We consider a point mass longitudinal model of an aircraft subject to the
gravity force $mg$ with $m$ being the mass and $g=9.8 ms^{-2}$, thrust $T$ , lift $L$ and drag $D$ (see Figure~\ref{figplane}).  The equations of motion are then described by
\begin{eqnarray}\label{eq:aircraft}
\dot{V} &=& \frac{1}{m} \left[ T\cos(\alpha) - D(\alpha,V)-mg\sin(\gamma) \right], \nonumber  \\
\dot{\gamma} &=& \frac{1}{mV} \left[ T\sin(\alpha) +L(\alpha,V)-mg\cos(\gamma) \right], \nonumber \\
\dot{z} &=& V\sin(\gamma),
\end{eqnarray}
where $V$  the velocity, $\gamma$  the flight path angle and $z$  the altitude are the states of the system. The angle of attack $\alpha$ and thrust $T$  are the inputs of the system. 

Typically, landing is operated at $T_{idle} = 0.2  T_{max}$ where $T_{max}$ is the
maximal thrust. This value enables the aircraft to counteract the drag due to
flaps, slats and landing gear. Most of the parameters for the DC9-30 can be
found in the literature. The values of the numerical parameters used for the
DC9-30 in this flight configuration are $m = 60, 000~(kg)$, and $T_{max} = 160, 000~(N)$. The lift and drag forces are thus
\begin{eqnarray}
L(\alpha, V ) &=& 68.6 (1.25 + 4.2\alpha)V^2, \nonumber \\
D(\alpha, V ) &=& \left[2.7+3.08 (1.25 + 4.2\alpha)^2\right]V^2,
\end{eqnarray}
both in Newtons.

For safety analysis and controller synthesis, we consider two separate aircraft failure scenarios in which the aircraft model~\eqref{eq:aircraft} is not valid anymore and we use data-driven differential inclusion models instead. In both cases, we are interested in  the safe landing speed of the aircraft $V_{safe}$ as it reaches  the ground $z=0$. Therefore, we define the unsafe set as 
$$
\mathcal{X}_u = \left\{ (V,\gamma,z) \mid V > V_{safe},~z=0 \right\}.
$$
We consider the  following regularity side information 
$$
\| x\|_{\mathcal{C}^1} \le 10,\quad |\dot{X}(t) - \dot{x}(t) | \le 10.
$$

In the first scenario, we model wing failure by sudden drop in the lift force $L$ to  $0.2L$ at $t=0.5(s)$. We collect $25$ data samples until $t=2(s)$ and we are interested in checking whether the aircraft satisfies the safe landing speed at the time of landing. Using Lemma~\ref{lem1}, we found a barrier certificate of degree $5$ proving that the system is safe. Figure~\ref{L2curve2cxcxcxc} illustrates the collected data, the actual system state evolution  and the solutions of the data-driven differential inclusion given the side information.

\begin{figure}
  \centering
  \includegraphics[scale=.39]{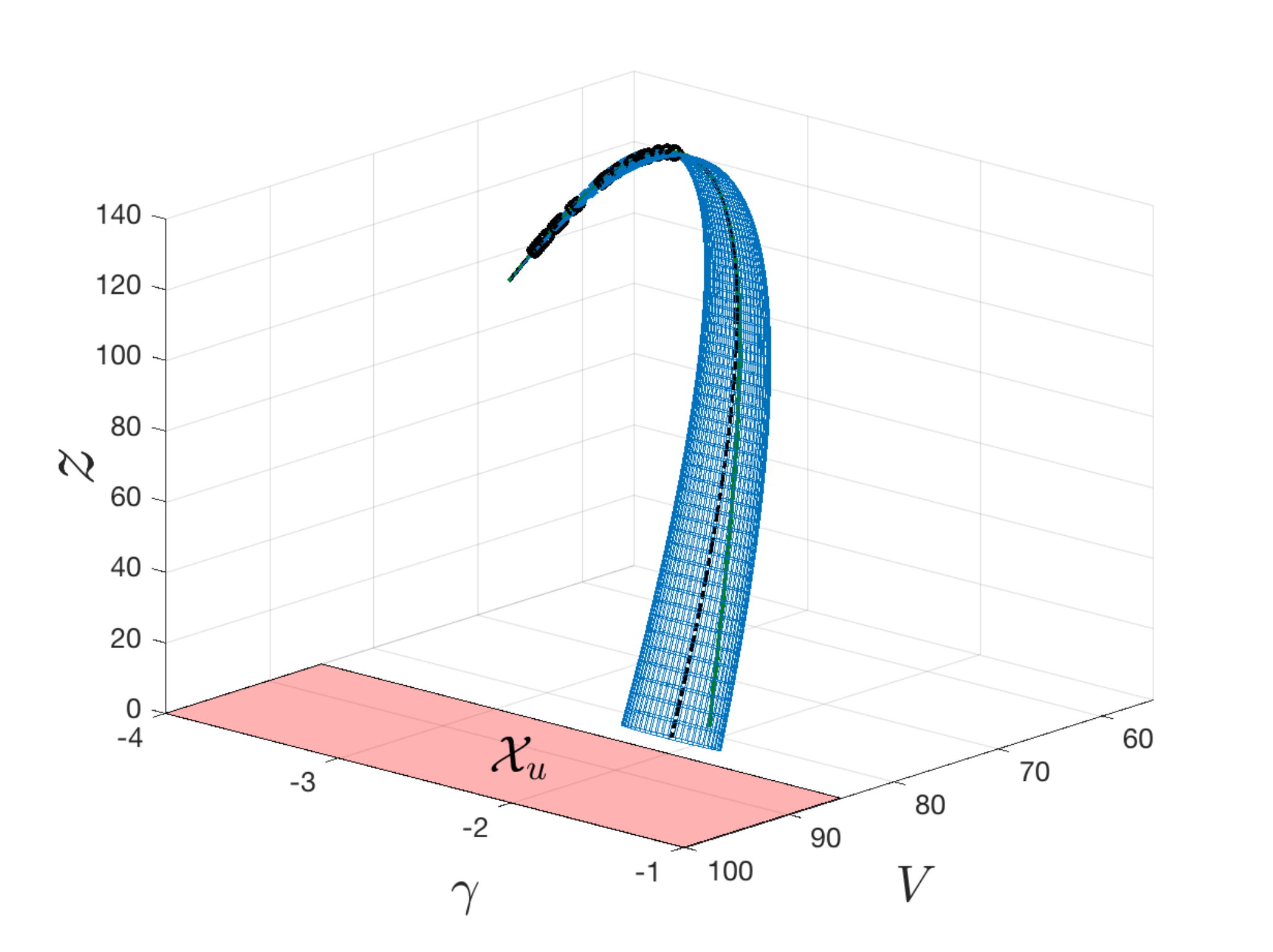}\\
  \caption{The data points $\{x(t_i)\}_{i=1}^{25}$ (black circles), the piecewise-polynomial approximation of the state $X(t)$ (black line), the solution set of the data-driven differential inclusion (the meshed cone) and the actual solution of the system (green). }\label{L2curve2cxcxcxc}
\end{figure}

\begin{figure}
  \centering
  \includegraphics[scale=.45]{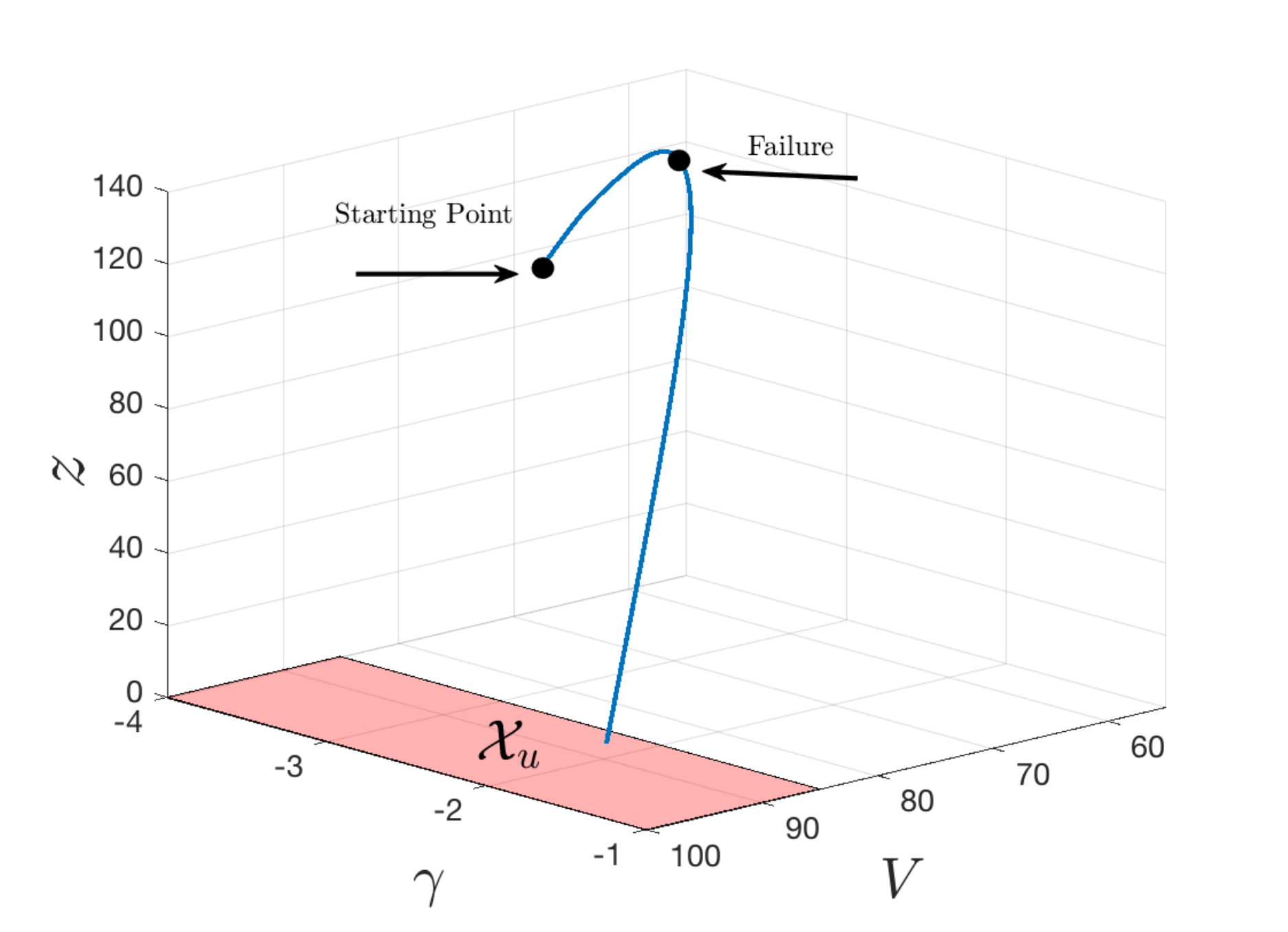}\\
  \caption{The solution of the plane system (blue) and the unsafe set (red surface). }\label{L2curve2cxcxcxc2}
\end{figure}

In the second scenario, we consider an engine control failure which is modeled as a sudden surge in thrust (to $0.8T_{max}$ from $0.2T_{max}$) at $t=0.5(s)$. 25 data samples are collected  non-uniformly until $t=2(s)$. The simulation results show that the system trajectories without the controller are not safe as shown on Figure~\ref{L2curve2cxcxcxc2}.  A data-driven differential inclusion is constructed using the side information and piecewise polynomial interpolation. We then allow $T_g=5(s)$ to calculate the safe controller using Lemma~\ref{lem5651}. Figure~\ref{L2curve2cxdddcxcxc2} shows the result of applying the safe controller obtained based on Lemma 2 with certificates $B$ and $W$ of degree $3$. Hence, the safe controller is able to ensure safe landing despite the critical failure of the aircraft.

\begin{figure}
  \centering
  \includegraphics[scale=.45]{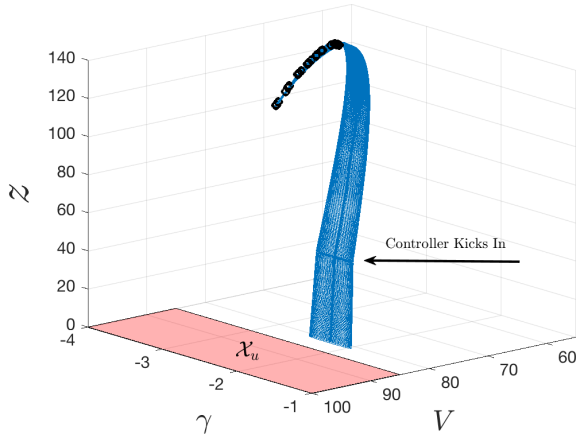}\\
  \caption{The solution of the data driven differential inclusion system (meshed surface), data points (black circles) and the unsafe set (red surface). Note that, as the safe controller kicks in, the state evolution  of the system becomes~safe.}\label{L2curve2cxdddcxcxc2}
\end{figure}

\section{CONCLUSIONS AND FUTURE WORK} \label{sec:conclusions}

\subsection{Conclusions}

We considered the problem of safety analysis  and controller synthesis for safety of systems for which only limited data and some regularity side information on system states are available. We reformulated the problem into safety analysis  and safe controller synthesis of differential inclusions.  We proposed a solution established upon an extension of barrier certificates for differential inclusions. In the case of piecewise-polynomial approximations of data, we showed that the barrier certificates can be found by polynomial optimization. Two examples were used to illustrate the proposed approach.

\subsection{Future Work}


In this study, we assumed the measurements of the states are not noisy. In many practical situations, this is not the case and sensor measurements are subject to measurement noise, say due to heat. In this setting, safety analysis requires side information in the probabilistic sense. In this respect, one can use notions such as spline smoothing~\cite{cite-keycdfd}.


The application of the proposed safety analysis results in this paper are not only limited to data-driven differential inclusions but also the discussions in Section~\ref{sec:ddss|} can be used to tackle safety analysis of discontinuous and hybrid systems, such as mechanical system with impact and Columb friction~\cite{posa2016stability}.

%


\bibliography{references}
\bibliographystyle{IEEEtran}
\begin{IEEEbiography}[{\includegraphics[width=1in,height=1.25in,clip,keepaspectratio]{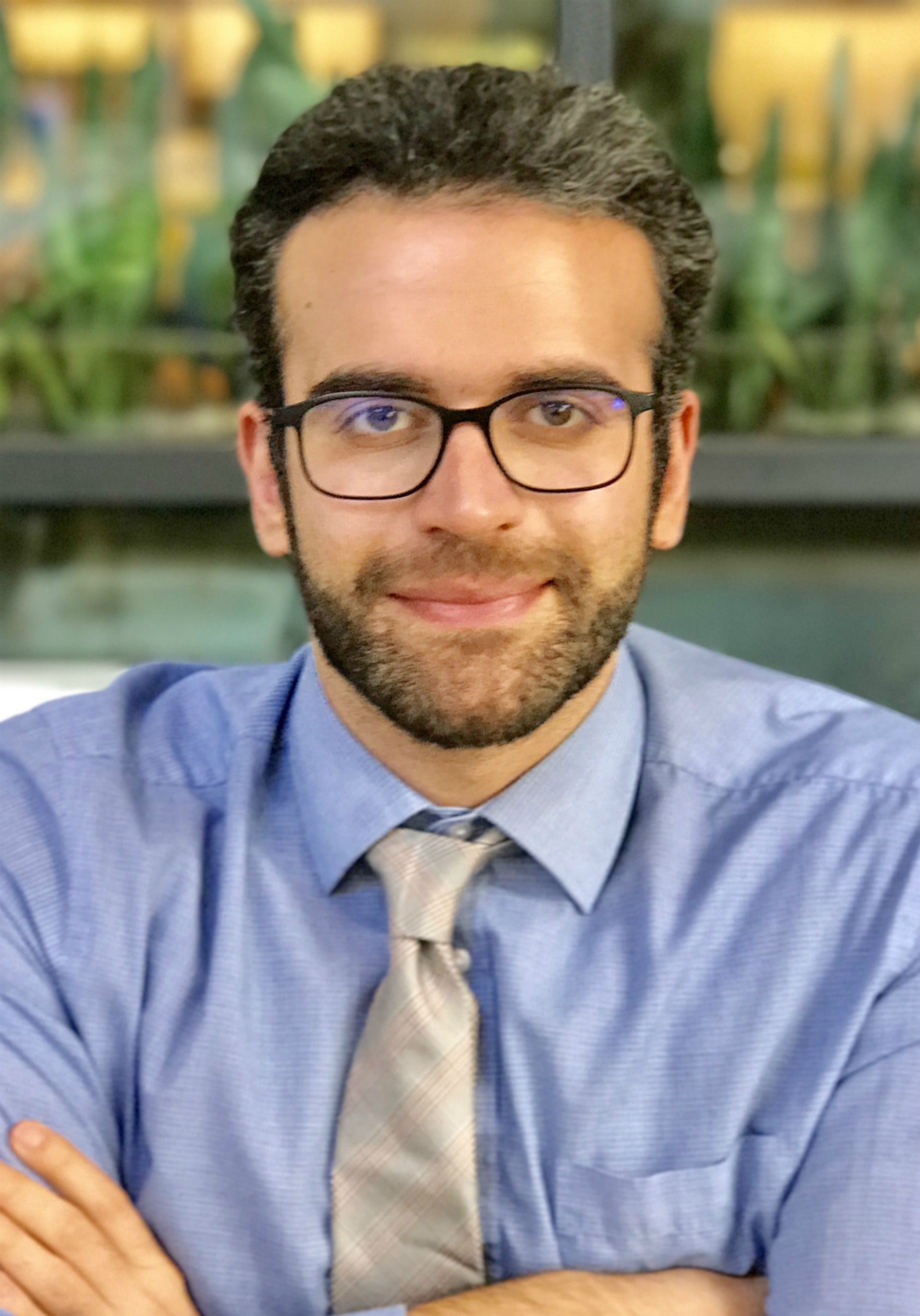}}]{Mohamadreza Ahmadi}
M. Ahmadi joined the Institute for Computational Engineering and Sciences (ICES) at the University of Texas at Austin  as a postdoctoral scholar in Fall 2016. He received his DPhil in Engineering Science (Controls) from the University of Oxford in Fall 2016 as a member of Keble College and a Clarendon Scholar. His current research interests are at the intersection of learning and control, games on Markov decision processes, and computational methods for analysis of PDEs.
\end{IEEEbiography}
\begin{IEEEbiography}[{\includegraphics[width=1in,height=1.25in,clip,keepaspectratio]{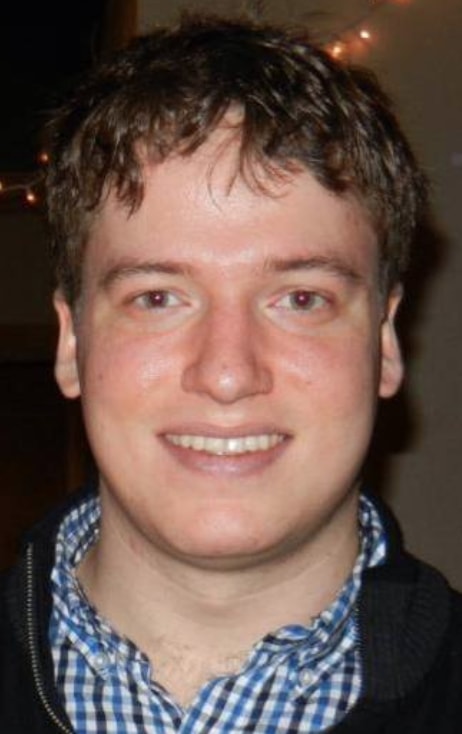}}]{Arie Israel}
Arie Israel joined the Department of Mathematics at the University of Texas at Austin as an assistant professor in Fall 2014. He received his Ph.D. degree from Princeton University in 2011, and held a postdoctoral position at New York University. His primary research focus has been on the theoretical and algorithmic foundations of extension and interpolation problems in smooth function spaces. His work draws on tools from Harmonic Analysis and PDE. Recently he has been exploring applications of his work to machine learning and control theory. 
\end{IEEEbiography}
\vspace{-18cm}
\begin{IEEEbiography}[{\includegraphics[width=1in,height=1.25in,clip,keepaspectratio]{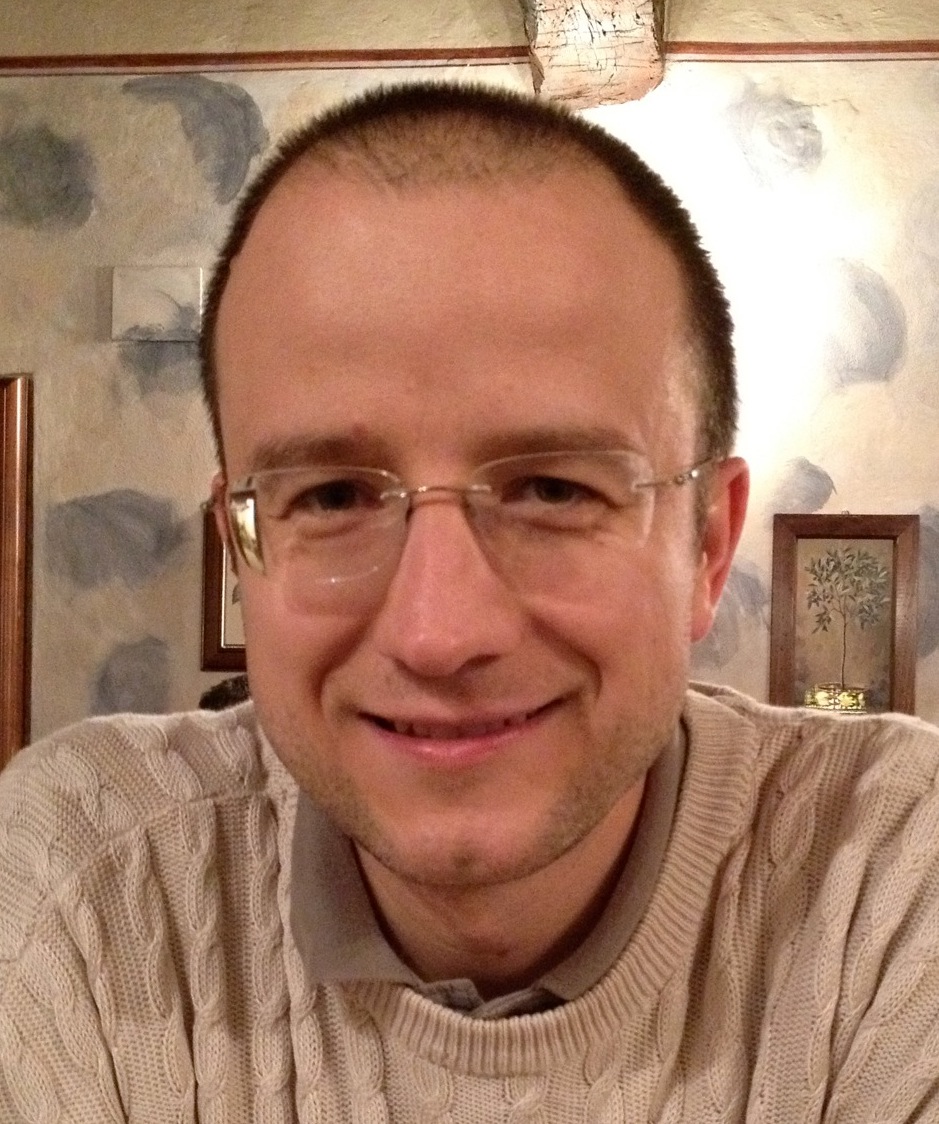}}]{Ufuk Topcu}
Ufuk Topcu joined the Department of Aerospace Engineering at the University of Texas at Austin as an assistant professor in Fall 2015. He received his Ph.D. degree from the University of California at Berkeley in 2008. He held research positions at the University of Pennsylvania and California Institute of Technology. His research focuses on the theoretical, algorithmic and computational aspects of design and verification of autonomous systems through novel connections between formal methods, learning theory and controls. 
\end{IEEEbiography}

\end{document}